\newcommand{\camino}[2][q]{P_{#2}^{#1}}
\newcommand{\B}[2][q]{\mathcal{B}(#2,#1)}
\newtheorem{theorem}{Theorem}[section]
\newtheorem{lemma}{Lemma}[section]
\newtheorem{proposition}{Proposition}
\newtheorem{corollary}{Corollary}
\newtheorem{conjecture}{Conjecture}
\newtheorem{definition}{Definition}
\begin{document}

\title[]{On the spectral radius of block graphs having all their blocks of the same size}

\author[C.M. Conde]{Cristian M. Conde${}^{1,3}$}
\author[E. Dratman]{Ezequiel Dratman${}^{1,2}$}
\author[L.N. Grippo]{Luciano N. Grippo${}^{1,2}$}

\address{${}^{1}$Instituto de Ciencias\\Universidad Nacional de General Sarmiento}
\address{${}^{2}$Consejo Nacional de Investigaciones Cient\'ificas y Tecnicas, Argentina}
\address{${}^{3}$Instituto Argentina de Matem\'atica "Alberto Calder\'on" -  Consejo Nacional de Investigaciones Cient\'ificas y Tecnicas, Argentina}

\email{cconde@campus.ungs.edu.ar}
\email{edratman@campus.ungs.edu.ar}
\email{lgrippo@campus.ungs.edu.ar}

\keywords{block graphs, spectral radius of a graph}
\subjclass[2010]{05 C50, 15 A18}

\date{}

\maketitle

\begin{abstract}
Let $\B{n}$ be the class of block graphs on $n$ vertices having all their blocks of the same size. We prove that if $G\in\B{n}$ has at most three pairwise adjacent cut vertices then the minimum spectral radius $\rho(G)$ is attained at a unique graph. In addition, we present a lower bound for $\rho(G)$ when $G\in\B{n}$.
\end{abstract}

\section{Introduction}

The problem of finding those graphs that maximize or minimize the spectral radius of a connected graph on $n$ vertices, within a given graph class $\mathcal H$, have attracted the attention of many researchers. Usually, this kind of problems are solved by means of graphs transformations preserving the number of vertices, so that the resulting graph also belongs to $\mathcal H$, and having a monotone behavior respect to the spectral radius.  We refer to the reader to~\cite{Dragan2015} for more details about this and other techniques. In~\cite{LP1973}, Lov\' asz and Pelik\'an proved that the unique graph with maximum spectral radius among the trees on $n$ vertices is the star $K_{1,n-1}$ and the unique graph with minimum spectral radius is the path $P_n$. As far as we know, this article is the first one within this research line. Since adding edges to a graph increases the spectral radius (see Corollary~\ref{cor: adding edges}), if $\mathcal H$ contains complete graphs and paths, then $K_n$ maximizes and $P_n$ minimizes $\rho(G)$ among graphs in $\mathcal H$, meaning that this two graphs have the minimum and maximum spectral radius among graphs on $n$ vertices when $\mathcal H$ is the class of all connected graphs. Consequently, several authors have considered the problem when $\mathcal H$ is a graph class not containing either paths or complete graphs and defined by certain restriction of classical graph parameters. Graphs with a given independence number~\cite{LuLin2015, XS2013}, graphs with a given clique number~\cite{SH2008} and graphs with given connectivity and edge-connectivity~\cite{LSCC2009}. It is worth mentioning that the foundation stone that gives place to many late articles in connection with this problem is that of Brualdi and Solheid~\cite{BS-1986}. 
\subsection*{About our statement in connection with Lov\'asz an Pelik\'an result}
For concepts and definitions used in this section we referred the reader to Section~\ref{sec: preliminaries}. 

In this article we consider the class $\B{n}$ of block graphs on $n$ vertices having all their blocks on $q+1$ vertices, for every $q\ge 2$. For results related to the adjacency matrix of block graphs we refer to the reader to~\cite{brs-2014}. Trees are block graphs with all their blocks on two vertices. In connection with the spectral radius on trees it was obtained the following result.

\begin{theorem}\cite{LP1973}\label{thm: maximum-trees}
	If $T$ is a tree on $n$ vertices, then $2\cos\left(\frac{\pi}{n+1}\right)=\rho(P_n)\le\rho(T)\le\rho(K_{1,n-1})=\sqrt{n-1}$. 
\end{theorem}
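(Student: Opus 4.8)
The plan is to establish the four parts of the statement separately. The two equalities are routine. For $K_{1,n-1}$: its adjacency matrix has rank $2$ and trace $0$, while $\operatorname{tr}(A^{2})$ equals twice the number of edges, i.e.\ $2(n-1)$; since the two nonzero eigenvalues are $\pm\lambda$, this forces $\lambda=\sqrt{n-1}$. For $P_n$: the vector with $j$-th coordinate $\sin\!\big(jk\pi/(n+1)\big)$ is an eigenvector of $A(P_n)$ for the eigenvalue $2\cos\!\big(k\pi/(n+1)\big)$, $k=1,\dots,n$ --- one checks this from $\sin((j-1)\theta)+\sin((j+1)\theta)=2\cos\theta\sin(j\theta)$ together with the vanishing of $\sin(j\theta)$ at $j=0$ and $j=n+1$ --- and the largest such value, obtained at $k=1$, is $2\cos\!\big(\pi/(n+1)\big)$.

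For the upper bound $\rho(T)\le\sqrt{n-1}$ I would use that $T$ is bipartite. Relative to a bipartition, $A(T)=\left(\begin{smallmatrix}0&B\\B^{T}&0\end{smallmatrix}\right)$, so $\rho(T)^{2}=\rho(A(T)^{2})=\rho(BB^{T})$. Since $BB^{T}$ is positive semidefinite, $\rho(BB^{T})\le\operatorname{tr}(BB^{T})$, and $\operatorname{tr}(BB^{T})$ is the sum of the squares of the entries of $B$; as these entries are $0$ or $1$, the sum is exactly the number of edges of $T$, namely $n-1$. Hence $\rho(T)\le\sqrt{n-1}=\rho(K_{1,n-1})$, and equality forces $BB^{T}$, hence $B$, to have rank $1$, so the edges of $T$ form a complete bipartite subgraph; connectedness then pins $T$ down to $K_{1,n-1}$. (Alternatively one may quote Hong's inequality $\rho(G)\le\sqrt{2m-n+1}$ for connected graphs, which is the same bound when $m=n-1$.)

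For the lower bound $\rho(P_n)\le\rho(T)$ I would induct on the number of leaves of $T$, using a path-merging transformation. If $T$ is not a path it has a branch vertex; rooting $T$ at a leaf and taking a branch vertex $v$ of maximum depth, every subtree hanging below $v$ is a pendant path, so $v$ carries at least two pendant paths, of lengths $p\le q$. Form $T'$ by detaching the pendant path of length $p$ at $v$ and appending it to the free end of the pendant path of length $q$, thereby replacing these two pendant paths by a single one of length $p+q$. Then $T'$ has one fewer leaf, and the crucial point is that $\rho(T')\le\rho(T)$; iterating produces a chain of trees on $n$ vertices, of non-increasing spectral radius, that terminates at $P_n$.

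The main obstacle is this last monotonicity claim, a Li--Feng-type lemma asserting that, for a connected graph $H$ with a distinguished vertex $v$, attaching two pendant paths of lengths $k$ and $l$ at $v$ gives a graph whose spectral radius strictly decreases as the pair $(k,l)$ is made more unbalanced. I would prove it through the characteristic polynomial $\phi$: apply the pendant-vertex deletion identity $\phi(G,x)=x\,\phi(G-u,x)-\phi(G-u-w,x)$ (for a pendant vertex $u$ with neighbour $w$) repeatedly to express the relevant $\phi$'s in terms of $\phi(H,x)$, $\phi(H-v,x)$ and the Chebyshev-like polynomials of the pendant paths, and then compare the two resulting expressions at $x=\rho$ via a sign analysis (using positivity of $\phi(H,\cdot)$ and $\phi(H-v,\cdot)$ beyond their largest roots). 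Alternatively one can retrace Lov\'asz and Pelik\'an's original route through $\phi(F,x)=\sum_{k}(-1)^{k}m_{k}(F)x^{n-2k}$ for forests ($m_k$ the number of $k$-matchings), using that $P_n$ maximizes and $K_{1,n-1}$ minimizes every $m_k$ among trees on $n$ vertices --- an easy induction from $m_k(G)=m_k(G-u)+m_{k-1}(G-u-w)$ at a pendant vertex --- together with the more subtle fact that, among forests on a fixed number of vertices, coefficientwise domination of these matching sequences forces the reverse domination of the spectral radii.
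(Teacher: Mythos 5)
The paper does not actually prove this statement --- it is quoted from \cite{LP1973} --- so the comparison is with the classical proof and with the paper's own machinery. Your route is sound and is essentially the ``graph transformation'' route rather than Lov\'asz--Pelik\'an's original matching-coefficient argument (which you mention as an alternative). The computations of the two equalities and the upper bound are complete and correct ($\operatorname{tr}(BB^{T})=n-1$ indeed bounds $\rho(T)^{2}$, and the equality analysis, though not required by the statement, is fine). For the lower bound, your reduction is correct: at a deepest branch vertex all hanging subtrees are pendant paths, each merge reduces the number of leaves by one, and a tree with two leaves is $P_n$; the needed monotonicity is exactly the pendant-path rebalancing lemma of Li and Feng (the same reference \cite{LiFe1979} the paper uses for Lemma~\ref{lem: subgraph poly}), in the correct direction (unbalancing the two paths decreases $\rho$). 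Note that this is precisely the $q=1$ specialization of the paper's own scheme: Proposition~\ref{prop: pendant pathas at the same vertex} (paths pendant at one vertex), Proposition~\ref{prop: pendant path bocks at distinct vertices}, and the order $\prec$ of Definition~\ref{rmk: polynomial comparison} together with Schwenk's formula (Lemma~\ref{lem: Schwenk}) carry out for blocks of size $q+1$ exactly the sign analysis you sketch; for $q=1$ the hypothesis about three pairwise adjacent cut vertices is vacuous, so the tree case follows. The only part you leave at sketch level is that sign analysis itself (and, in your alternative route, the step that coefficientwise domination of matching numbers forces the reverse ordering of spectral radii, which is the real content of \cite{LP1973} and is subtler than the $m_k$ extremality claims); since both completions are classical and the first is literally the argument reproduced in the paper's Propositions, this is a matter of detail rather than a gap in the approach.
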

In an attempt to generalize Theorem~\ref{thm: maximum-trees}, we find the unique graph in $G\in\B{n}$ that reaches the minimum spectral $\rho(G)$ in the case in which $G$ has at most three pairwise adjacent cut vertices. Besides, we present a lower bound for $\rho(G)$.
\begin{theorem}\label{thm:extremal radius}
	If $G\in\B{n}$, then $\rho(G)\le\rho(S(n,q))$ and  $S(n,q)$ is the unique graph that maximizes the spectral radius. In addition, if $G$ has at most three pairwise adjacent cut vertices then $\rho\left(\camino{b}\right)\le\rho(G)$ and $\camino{b}$ is the unique graph that minimizes the spectral radius in the class $\B{n}$, where $b=\frac{n-1}{q}$. %respectively.	
\end{theorem}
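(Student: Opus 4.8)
\emph{Overview.} The two halves of the statement are extremal problems of opposite flavour, and I would attack each by a $\rho$-monotone transformation that keeps the graph inside $\B{n}$, in the spirit of Lov\'asz and Pelik\'an's proof of Theorem~\ref{thm: maximum-trees}: for the maximiser the transformation \emph{collapses} blocks onto a single vertex, for the minimiser it \emph{stretches} the block structure out into a path. The upper bound is valid for all of $\B{n}$ and is the easier half; the lower bound is where the hypothesis on pairwise adjacent cut vertices --- equivalently, that no block of $G$ contains four or more cut vertices, which is automatic when $q=2$ --- is used, and it is the harder half.

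\emph{Upper bound.} Let $G\in\B{n}$ with $G\neq S(n,q)$, let $x>0$ be the Perron eigenvector of $A(G)$ normalised so that $\|x\|=1$, and pick a vertex $u$ with $x_u$ maximum. From the eigenvalue equations one gets $(\rho(G)+1)(x_v-x_w)=\sum_{z\in N(v)\setminus B}x_z>0$ for the cut vertex $v$ and any other vertex $w$ of a leaf block $B$, so $v$ strictly dominates the rest of $B$; hence $u$ is not a non-cut vertex of a leaf block, and since $G\neq S(n,q)$ the block-cut tree of $G$ has a leaf block $B$ whose cut vertex $v$ is different from $u$, so that $u\notin B$ and $x_v\le x_u$. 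Let $G'$ be obtained by deleting the $q$ edges from $v$ into $B$ and adding the $q$ edges from $u$ to $B\setminus\{v\}$; then $G'\in\B{n}$ and
\[
\rho(G')\;\ge\;x^{\top}A(G')x\;=\;\rho(G)+2(x_u-x_v)\sum_{w\in B\setminus\{v\}}x_w\;\ge\;\rho(G).
\]
If equality held throughout, $x$ would also be a Perron eigenvector of $G'$, and comparing the $v$-coordinates of $A(G')x=\rho(G)x=A(G)x$ would force $\sum_{w\in B\setminus\{v\}}x_w=0$, impossible since $x>0$. Hence $\rho(G')>\rho(G)$, so no graph other than $S(n,q)$ can maximise $\rho$ over the finite class $\B{n}$; that is, $S(n,q)$ is the unique maximiser.

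\emph{Lower bound.} Here the target is $\camino{b}$. I would first record a coalescence identity for the characteristic polynomial of two graphs glued at a single vertex and use it to obtain a transfer-matrix recursion for the polynomials $f_k$ and $g_k$ attached to a pendant ``block-path'' of $k$ copies of $K_{q+1}$ (with, respectively without, its attaching vertex); the properties needed are positivity of $f_k,g_k$ and monotonicity of $f_k/f_{k-1}$ for $\lambda\ge q$. The core reduction is a block-graph analogue of the pendant-path shifting lemma: if two pendant block-paths of lengths $k\ge\ell\ge1$ hang at a common vertex, relocating the outermost block of the shorter one to the end of the longer one --- producing lengths $k+1$ and $\ell-1$ --- strictly decreases $\rho$; this is proved by evaluating the resulting difference of characteristic polynomials at $\lambda=\rho(G)$ and reading off its sign from the recursion. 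Iterating, from the leaves of the block-cut tree inward, merges all pendant block-paths at any cut vertex into one, so that afterwards every cut vertex lies in at most two blocks. A second transformation --- and this is exactly where ``at most three cut vertices per block'' keeps the list of configurations finite and manageable --- takes a block with three cut vertices, two of whose branches are pendant block-paths, and re-grafts one of those branches onto the end of the other, lowering that block's cut-vertex count to two and again strictly decreasing $\rho$. When no move remains, the block-cut tree is a path, i.e.\ $G=\camino{b}$, and strict monotonicity at each non-trivial step yields both $\rho(\camino{b})\le\rho(G)$ and uniqueness of the minimiser.

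\emph{Where the difficulty lies.} The real work is in the two shifting lemmas, and above all in the three-cut-vertex move. Unlike the tree case, one elementary step relocates $q$ edges at once and changes the neighbourhoods of several vertices, so the sign analysis of the relevant polynomial difference at $\lambda=\rho$ is delicate: it needs sufficiently precise control of $f_k,g_k$ for $\lambda\ge q$, and in the three-cut-vertex situation a handful of sub-cases (according to which of the three branches are already pendant block-paths) all have to be verified. One must also check at every stage that the transformed graph still lies in $\B{n}$ and still has at most three cut vertices per block --- both hold because the moves only ever decrease branching, but it must be said --- and this is precisely the point where the bound ``three'' rather than an arbitrary number is essential, since with four or more cut vertices in a common block the reduction branches into configurations for which monotonicity is no longer transparent.
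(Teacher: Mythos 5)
Your \emph{upper-bound} argument is essentially correct and is genuinely different from the paper's treatment: the paper does not reprove the maximization statement at all, it simply imports it from the hypertree literature (Theorem~\ref{thm:extremal radius maximum}, i.e.\ \cite[Theorem 4.1]{Linetal2017}), whereas your Perron-vector edge-relocation (move a leaf block from its cut vertex $v$ to a maximum-weight vertex $u$, compare Rayleigh quotients, and rule out equality by comparing the $v$-coordinate of the eigenvalue equations) gives a short self-contained proof including uniqueness. The only point you should spell out is why, when $G\neq S(n,q)$, there is a leaf block whose cut vertex differs from $u$ (if every leaf block contained $u$, the block--cut tree would be a star centred at $u$ and $G$ would be $S(n,q)$); that is routine.

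The \emph{lower-bound} half, however, is only a programme, and the gap is exactly the content of the theorem. You assert two shifting lemmas --- the same-vertex shift $(k,\ell)\mapsto(k+1,\ell-1)$ for pendant block-paths and the three-cut-vertex regrafting move --- and explicitly defer their proofs (``the sign analysis \dots is delicate''). But these monotonicity statements are what the paper spends Sections~\ref{sec: preliminaries} and~\ref{sec: graphs transformations} establishing: Schwenk's formula (Lemma~\ref{lem: Schwenk}), the comparison Lemmas~\ref{lem: first technical lemma} and~\ref{lem: second technical lemma}, the clique-shifting Proposition~\ref{prop: pendant cliques}, and then Propositions~\ref{prop: pendant pathas at the same vertex} and~\ref{prop: pendant path bocks at distinct vertices}; the hypothesis of at most three pairwise adjacent cut vertices is used inside the proof of Proposition~\ref{prop: pendant path bocks at distinct vertices}, where the difference $P_{G[q,k+1,\ell-1]}-P_{G[q,k,\ell]}$ is telescoped down to comparing $(G-v)[q,t,0]$ with $G^r[q,t,0]$. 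Note also that your same-vertex move is not literally the paper's Proposition~\ref{prop: pendant pathas at the same vertex} (which instead relocates the attachment point of the rest of the graph from the centre of a spider of path-blocks to a leaf-block vertex, handling a type-one special block in one step), so you cannot even quote the paper's lemmas verbatim to close the hole. Unless you carry out the Schwenk/transfer-matrix computations proving both moves (with the required sign control for $\lambda\ge\rho$), plus the structural fact that a graph in $\B{n}$ admitting no move is $\camino{b}$ (Lemma~\ref{lem: special blocks}), the inequality $\rho(\camino{b})\le\rho(G)$ and the uniqueness of the minimiser remain unproven; your iteration scheme is the same as the paper's, but its technical core is missing.
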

We have strongly evidence obtained by the aid of Sage software that the hypothesis of having at most three pairwise adjacent cut vertices,  in connection with the minimum of the spectral radius, can be dropped.  
\subsection*{Organization of the article}

This article is organized as follows. In Section~\ref{sec: preliminaries} we present some definitions and preliminary results. In Section~\ref{sec: graphs transformations} are presented two graph transformations having a monotone behavior respect to the spectral radius. Section~\ref{sec: main result} is devoted to put all previous result together in order to prove our main result.  In Section~\ref{sec: bounds} a lower bound for the spectral radius is presented. Finally, Section~\ref{sec: discussion}  contains a short summary of our work and two conjectures are posted.

\section{Preliminaries}\label{sec: preliminaries}

\subsection{Definitions}

All graphs, mentioned in this article, are finite, have no loops and multiple edges. Let $G$ be a graph. We use $V(G)$ and $E(G)$ to denote the set of vertices and the set of edges of $G$, respectively. A graph on one vertex is called \emph{trivial graph}. Let $v$ be a vertex of $G$, $N_G(v)$ (resp. $N_G[v]$) stands for the neighborhood of $v$ (resp. $N_G(v)\cup\{v\}$), if the context is clear the subscript $G$ is omitted. We use $d_G(v)$ to denote the degree of $v$ in $G$, or $d(v)$ provided the context is clear. By $\overline G$ we denote the complement graph of $G$. Given a set $F$ of edges of $G$ (resp. of $\overline G$), we denote by $G-F$ (resp. $G+F$) the graph obtained from $G$ by removing (resp. adding) all the edges in $F$. If $F=\{e\}$, we use $G-e$  (resp. $G+e$) for short. Let $X\subseteq V(G)$, we use $G[X]$ to denote the graph induced by $X$. By $G-X$ we denote the graph $G[V(G)\setminus X]$. If $X=\{v\}$, we use $G-v$ for short. Let $G$ and $H$ \textcolor{red}{be} two graphs, we use $G+H$ to denote the disjoint union between $G$ and $H$, and $G^+$ stands for the graph obtaining by adding an isolated vertex to $G$. We denote by $P_n$ and $K_n$ to the path and the complete graph on $n$ vertices.
%saqué esta definición
%. Let $A, B\subseteq V(G)$ we said that $A$ is \emph{complete to} (resp. \emph{anticomplete to}) $B$ if every vertex in $A$ is adjacent (resp. nonadjacent) to every vertex of $B$

We denote by $A(G)$ the adjacency matrix of $G$, and $\rho(G)$ stands for the spectral radius of $A(G)$, we refer to $\rho(G)$ as the spectral radius of $G$. If $x$ is the principal eigenvector of $A(G)$, which is indexed by $V(G)$, we use $x_u$ to denote the coordinate of $x$ corresponding to the vertex $u$. We use $P_G(x)$ to denote the characteristic polynomial of $A(G)$; i.e., $P_G(x)=\det(xI_n-A(G))$. It is easy to prove that $P_{K_n}(x)=(x-n+1)(x+1)^{n-1}$.

A vertex $v$ of a graph $G$ is a \emph{cut vertex} if $G-v$ has a number of connected components greater than the number of connected components of $G$. Let $H$ be a graph. A \emph{block} of $H$, also known as \emph{$2$-connected component}, is a maximal connected subgraph of $H$ having no cut vertex. A \emph{block graph} is a connected graph whose blocks are complete graphs. We use $\B{n}$ to denote the family of block graphs on $n$ vertices whose blocks have $q+1$ vertices.  Notice that if $B\in\B{n}$ and $b$ is its number of blocks then $b=\frac{n-1}{q}$. Let $G$ be a block graph, a \emph{leaf block} is a block of $G$ such that contains exactly one cut vertex of $G$. We use $S(n,q)$ to denote the block graph in $\B{n}$ having $b$ blocks with only one cut vertex. By $\camino{b}$ we denote the block graph in $\B{bq+1}$ with at most two leaf blocks when $n-1>q$ and no cut vertices when $n-1=q$, called $(q,b)$-path-block. 
\subsection{Preliminaries results}
This subsection is split into two parts. In the first one we present the results needed to	deal with the minimum spectral radius in $\B{n}$, and in the second one we briefly describe the previous result in connection with the maximum spectral radius in this class.
\subsubsection*{Tools for the minimum}

We will introduce a partial order on the class of graphs. We will use it to deal with the graph transformations used to prove our main result. This technique was pioneered by Lov\'asz and Pelik\'an~\cite{LP1973}.

\begin{definition}\label{rmk: polynomial comparison}
	Let $G$ and $H$ be two graphs. We denote by $G\prec H$, if $P_H(x)>P_G(x)$ for all $x\ge \rho(G)$. \end{definition}
	It is immediate that if $G\prec H$ then $\rho(H)<\rho(G)$. The spectrum radius is nondecreasing respect to the subgraph partial order.

We repeatedly use the following Lemma to deal with the subgraph partial order previously defined.
\begin{lemma}\label{lem: subgraph spectral radius}
	If $H$ is a proper subgraph of $G$ then $\rho(H)<\rho(G)$.
\end{lemma}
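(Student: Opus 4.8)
The plan is to upgrade the (already noted) monotonicity $\rho(H)\le\rho(G)$ to strict inequality by invoking the Perron--Frobenius theorem for the irreducible matrix $A(G)$. Since every block graph is connected, I work under the standing hypothesis that $G$ is connected, so $A(G)$ is nonnegative and irreducible; let $x>0$ be its Perron eigenvector, $A(G)x=\rho(G)x$. First I would reduce an arbitrary proper subgraph $H$ to a \emph{spanning} one: if $V(H)\subsetneq V(G)$, pick $v\in V(G)\setminus V(H)$ and note $H$ is a subgraph of $G-v$, hence of the spanning subgraph $G-F$ with $F$ the (nonempty, as $G$ is connected and nontrivial) set of edges of $G$ at $v$; and $\rho(H)\le\rho(G-v)=\rho(G-F)$ since adding an isolated vertex does not change $\rho$. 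If $V(H)=V(G)$ then $H=G-F$ outright with $F=E(G)\setminus E(H)\neq\varnothing$. So it suffices to prove $\rho(G-F)<\rho(G)$ for every nonempty $F\subseteq E(G)$.

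For $H:=G-F$ write, for each vertex $i$, $\delta_i:=\sum_{j\,:\,ij\in F}x_j$, so that $(A(H)x)_i=\rho(G)x_i-\delta_i$ and $\delta_i>0$ precisely when $i$ is an endpoint of an edge of $F$ (using $x>0$). Suppose, for contradiction, that $\rho(H)=\rho(G)=:\rho$, and let $z\ge 0$, $z\neq 0$, be a nonnegative eigenvector of $A(H)$ for its spectral radius $\rho$ (such $z$ exists for any nonnegative matrix). Pairing with $x$ and using symmetry of $A(H)$, $x^{\mathsf T}A(H)z=(A(H)x)^{\mathsf T}z=\rho\,x^{\mathsf T}z-\sum_i\delta_i z_i$, whereas also $x^{\mathsf T}A(H)z=\rho\,x^{\mathsf T}z$; hence $\sum_i\delta_i z_i=0$, and since all terms are nonnegative, $z_i=0$ for every endpoint $i$ of an edge of $F$. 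But then, for each vertex $i$, $(A(G)z)_i=(A(H)z)_i+\sum_{j\,:\,ij\in F}z_j=\rho z_i+0$, because each such $j$ is an endpoint of an edge of $F$ and so $z_j=0$. Thus $z$ is a nonnegative, nonzero eigenvector of the irreducible matrix $A(G)$ for $\rho(G)$, which by Perron--Frobenius forces $z>0$; this contradicts $z$ vanishing on the nonempty set of endpoints of edges of $F$. Therefore $\rho(G-F)<\rho(G)$, and the lemma follows.

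The one point requiring care is that $H$ need not be connected, so $A(H)$ need not be irreducible and its Perron vector need not be strictly positive; the argument above circumvents this by testing $A(H)$ against the \emph{positive} Perron vector of $G$ and extracting the contradiction solely from the irreducibility of $A(G)$. An alternative, more pedestrian route is to split into the case where $F$ is a single non-bridge edge (then $H$ is connected and one compares Rayleigh quotients using the positive Perron vector of $H$) and the case where $F$ meets a bridge (then a connected component $C$ of $G-F$ is a proper subgraph whose zero-extended Perron vector already tests $A(G)$ strictly below $\rho(G)$); I would nonetheless present the uniform argument, as it avoids the bridge bookkeeping entirely. I do not expect any real obstacle beyond this bookkeeping, the statement being a standard consequence of Perron--Frobenius theory.
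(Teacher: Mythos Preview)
Your argument is correct; the contradiction extracted from the irreducibility of $A(G)$ (forcing the lifted eigenvector $z$ to be strictly positive while it vanishes on the endpoints of $F$) is exactly the standard Perron--Frobenius route, and your reduction to the spanning case via $\rho(G-v)=\rho(G-F)$ is clean. The paper, for its part, does not give a proof of this lemma at all: it simply refers the reader to Bapat's textbook~\cite{Bapat}. So there is nothing to compare against beyond noting that you have supplied what the authors chose to cite, and that your implicit assumption that $G$ is connected matches the paper's standing context (block graphs), without which the strict inequality would indeed fail.
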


The reader is referred to~\cite{Bapat} for a proof of the above lemma. In particular, adding edges to a graph increases the spectral radius.

\begin{corollary}\label{cor: adding edges}
	If $G$ is a graph such that $uv\notin E(G)$, then $\rho(G)<\rho(G+uv)$. 
\end{corollary}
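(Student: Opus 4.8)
The plan is to derive this immediately from Lemma~\ref{lem: subgraph spectral radius}. First I would observe that $G$ and $G+uv$ have the same vertex set and that $E(G)\subseteq E(G+uv)$, with the inclusion being strict precisely because $uv\in E(G+uv)\setminus E(G)$; this last point is exactly the hypothesis $uv\notin E(G)$ (and $u\neq v$, since the graphs considered here have no loops). Hence $G$ is a proper subgraph of $G+uv$.

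Applying Lemma~\ref{lem: subgraph spectral radius} with $H=G$ and with $G+uv$ playing the role of the ambient graph then yields $\rho(G)<\rho(G+uv)$, which is the desired conclusion. I expect no real obstacle here: the only thing that needs checking is the properness of the subgraph containment, and the hypothesis supplies it directly. In this sense the corollary is a one-line consequence of the lemma.

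If instead one preferred a self-contained argument not invoking the cited lemma, the standard route is via the Perron--Frobenius theorem. Let $x$ be a principal eigenvector of $A(G)$, chosen nonnegative and nonzero. Since $A(G+uv)$ is obtained from $A(G)$ by changing the two symmetric entries in positions $(u,v)$ and $(v,u)$ from $0$ to $1$, the Rayleigh quotient gives $\rho(G+uv)\ge \frac{x^{\top}A(G+uv)\,x}{x^{\top}x}=\rho(G)+\frac{2\,x_u x_v}{x^{\top}x}\ge\rho(G)$, and one upgrades this to a strict inequality using that the principal eigenvector of a connected graph is strictly positive, together with a short separate argument in the disconnected case. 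Since Lemma~\ref{lem: subgraph spectral radius} is already available, however, the immediate deduction above is the proof I would record.
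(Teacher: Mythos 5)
Your proof is correct and is exactly the paper's (implicit) argument: the paper states this corollary as an immediate consequence of Lemma~\ref{lem: subgraph spectral radius}, since $G$ is a proper subgraph of $G+uv$, which is precisely your first deduction. The alternative Perron--Frobenius sketch is fine but unnecessary given that the lemma is already available.
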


The following technical lemma is a useful tool to develop graph transformations.

\begin{lemma}\cite{LiFe1979}\label{lem: subgraph poly}
	If $H$ is a spanning subgraph of the graph $G$ then $P_G(x)\le P_H(x)$ for all $x\ge\rho(G)$. In addition, if $G$ is connected then $G\prec H$.
\end{lemma}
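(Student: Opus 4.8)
The plan is to interpolate linearly between the two adjacency matrices and to control the determinant along the way. Write $A(H)$ and $A(G)$ for the adjacency matrices; since $H$ is spanning we have $V(H)=V(G)$ and $E(H)\subseteq E(G)$, so $A(H)\le A(G)$ entrywise and $B:=A(G)-A(H)\ge 0$ is the adjacency matrix of the edges of $G$ not present in $H$. For $t\in[0,1]$ set $M(t)=xI-A(H)-tB$ and $f(t)=\det M(t)$, so that $f(0)=P_H(x)$ and $f(1)=P_G(x)$. The whole argument reduces to showing that $f$ is nonincreasing on $[0,1]$ (and strictly decreasing in the connected case), so that $f(1)\le f(0)$, i.e. $P_G(x)\le P_H(x)$.

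First I would establish the two sign facts that drive the proof. Fix $x>\rho(G)$. For every $t\in[0,1]$ the matrix $A(H)+tB$ is nonnegative and satisfies $A(H)+tB\le A(G)$ entrywise, so by monotonicity of the spectral radius of nonnegative matrices $\rho(A(H)+tB)\le\rho(A(G))=\rho(G)<x$. Hence $M(t)=xI-(A(H)+tB)$ is a nonsingular $M$-matrix, which gives $\det M(t)>0$ and $M(t)^{-1}\ge 0$ entrywise (this is the Neumann expansion $M(t)^{-1}=\frac1x\sum_{k\ge0}(\tfrac1x(A(H)+tB))^{k}$, convergent because the spectral radius of the summand is $<1$). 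By Jacobi's formula,
\[
f'(t)=\det M(t)\cdot\operatorname{tr}\!\big(M(t)^{-1}\tfrac{d}{dt}M(t)\big)=-\det M(t)\cdot\operatorname{tr}\!\big(M(t)^{-1}B\big).
\]
Since $\det M(t)>0$ and $\operatorname{tr}(M(t)^{-1}B)=\sum_{i,j}(M(t)^{-1})_{ij}B_{ji}\ge0$ is a sum of products of nonnegative numbers, we get $f'(t)\le0$. Thus $f(1)\le f(0)$ for every $x>\rho(G)$, and letting $x\to\rho(G)^{+}$ and using continuity of polynomials yields $P_G(x)\le P_H(x)$ for all $x\ge\rho(G)$, which is the first assertion.

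For the second assertion assume $G$ is connected and $H$ is a proper spanning subgraph, so $B\neq0$. For $t\in(0,1]$ the off-diagonal support of $A(H)+tB$ equals $E(H)\cup(E(G)\setminus E(H))=E(G)$, so $A(H)+tB$ is irreducible; consequently, for $x>\rho(G)$ the Neumann series shows $M(t)^{-1}>0$ entrywise. Because $B\ge0$ is nonzero and $M(t)^{-1}>0$, at least one summand $(M(t)^{-1})_{ij}B_{ji}$ of the trace is strictly positive, so $\operatorname{tr}(M(t)^{-1}B)>0$ and hence $f'(t)<0$ on $(0,1)$. Therefore $f(1)<f(0)$, i.e. $P_G(x)<P_H(x)$ for every $x>\rho(G)$. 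It remains to upgrade this to strict inequality at the endpoint $x=\rho(G)$: there $P_G(\rho(G))=0$, while Lemma~\ref{lem: subgraph spectral radius} gives $\rho(H)<\rho(G)$, so $\rho(G)$ exceeds the largest root of the monic polynomial $P_H$ and thus $P_H(\rho(G))>0$. Combining the two ranges gives $P_H(x)>P_G(x)$ for all $x\ge\rho(G)$, that is $G\prec H$.

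The routine part is the linear-algebra bookkeeping, namely the $M$-matrix inequalities and Jacobi's formula. The only genuinely delicate point is the behavior at the single value $x=\rho(G)$, where the interpolation argument degenerates (the inequality $x>\rho(A(H)+tB)$ can fail at $t=1$, so $M(1)$ need no longer be invertible), and strictness must instead be extracted from the strict separation of spectral radii provided by Lemma~\ref{lem: subgraph spectral radius}.
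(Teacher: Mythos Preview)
The paper does not supply a proof of this lemma; it is quoted from \cite{LiFe1979} and the reader is pointed to \cite{CRS-1997} for details. So there is nothing in the paper to compare against line by line.

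Your argument is correct. The interpolation $M(t)=xI-(1-t)A(H)-tA(G)$ together with Jacobi's formula is a clean way to get the monotonicity $P_G(x)\le P_H(x)$ for $x>\rho(G)$: for such $x$ each $M(t)$ is a nonsingular $M$-matrix, hence $\det M(t)>0$ and $M(t)^{-1}\ge 0$, so $f'(t)=-\det M(t)\,\operatorname{tr}(M(t)^{-1}B)\le 0$. The strict part is also handled correctly: for $t\in(0,1]$ the off-diagonal pattern of $A(H)+tB$ is exactly $E(G)$, so irreducibility of $G$ forces $M(t)^{-1}>0$ entrywise and, since $B\neq 0$, $f'(t)<0$; at the endpoint $x=\rho(G)$ you rightly switch to Lemma~\ref{lem: subgraph spectral radius} to get $P_H(\rho(G))>0=P_G(\rho(G))$. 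One small remark: the ``in addition'' clause tacitly requires $H$ to be a \emph{proper} spanning subgraph (otherwise $G\prec H$ is vacuously false), and you use precisely this when invoking Lemma~\ref{lem: subgraph spectral radius}; that is consistent with how the paper applies the lemma.

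For context, the classical route (as in Li--Feng and in Cvetkovi\'c--Rowlinson--Simi\'c) typically removes one edge at a time and uses the edge-deletion identity for characteristic polynomials, together with the fact that each $P_{G-S}(x)$ is nonnegative for $x\ge\rho(G)$ by interlacing, then iterates over the missing edges. Your $M$-matrix/interpolation argument avoids the combinatorics of that identity and gives the inequality in one stroke; the price is the appeal to Jacobi's formula and the nonnegative-inverse property of $M$-matrices, which are standard but external to the purely graph-polynomial toolkit the paper otherwise relies on.
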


Let $G$ and $H$ be two graphs.  If $g\in V(G)$ and $h\in V(H)$,  the \emph{coalescence} between $G$ and $H$ at $g$ and $h$, denoted $G\cdot_{g}^h H$, is the graph obtained from $G$ and $H$, by identifying vertices $g$ and $h$ (see Fig.~\ref{fig: coalescence}). We use $G\cdot H$ for short. Notice that any block graph can be constructed by recursively using  the coalescence operation between a block graph and a complete graph.

\begin{figure}
	\centering
	\includegraphics[scale=0.5]{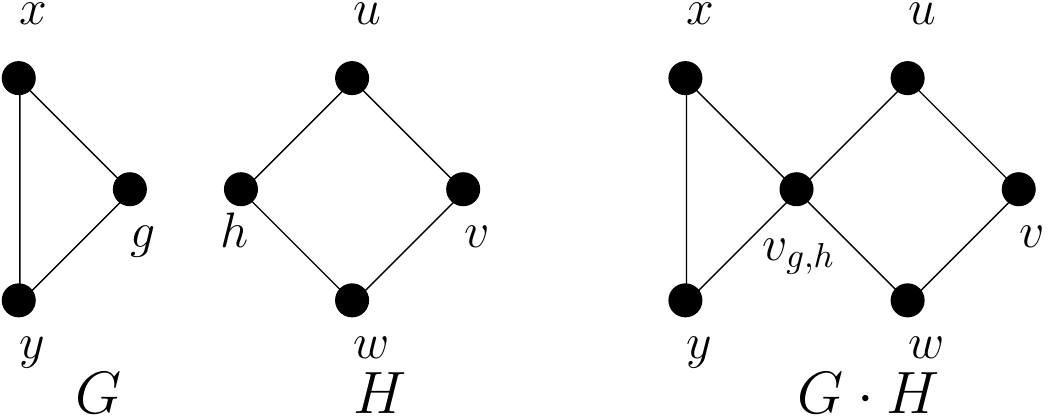}   
	\caption{The coalescence of graphs $G$ and $H$ at vertices $g$ and $h$.}
	\label{fig: coalescence}
\end{figure}

In the 70s Schwenk published an article containing useful formulas for the characteristic polynomial of a graph~\cite{Schwenk1974}. The part corresponding to minimizing the spectral radius of the main result of this research is based on the following Schwenk's formula, linking the characteristic polynomial of two graphs and the coalescence between them.

\begin{lemma}\cite{Schwenk1974}\label{lem: Schwenk}
	Let $G$ and $H$ be two graphs. If $g\in V(G)$, $h\in V(H)$, and $F=G\cdot H$, then
	\[P_F(x)=P_G(x)P_{H-h}(x)+P_{G-g}(x)P_H(x)-xP_{G-g}(x)P_{H-h}(x).\]  
\end{lemma}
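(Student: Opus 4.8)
The plan is to prove Schwenk's coalescence formula by a direct cofactor/permutation expansion of the determinant $P_F(x) = \det(xI - A(F))$, exploiting the fact that in the coalesced graph $F = G \cdot H$ there is a single distinguished vertex $w$ (the identification of $g$ and $h$) whose removal disconnects the $G$-part from the $H$-part. Let me set up notation: write $V(G) = \{g\} \cup V'$ and $V(H) = \{h\} \cup V''$, so that $V(F) = \{w\} \cup V' \cup V''$, where $V'$ and $V''$ index no mutual adjacencies (every edge of $F$ lies entirely within $G$ or entirely within $H$). I would index the matrix $M := xI - A(F)$ so that the row/column for $w$ comes first, followed by the $V'$ block and then the $V''$ block; crucially, the $(V', V'')$ off-diagonal blocks of $M$ are zero, so $M$ has a near-block-diagonal shape coupled only through the single row and column of $w$.

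First I would expand $\det(M)$ along the row indexed by $w$. The diagonal entry is $x$, and the only nonzero off-diagonal entries in that row come from neighbors of $w$, which split into neighbors inside $G$ (namely $N_G(g)$) and neighbors inside $H$ (namely $N_H(h)$). The cofactor of the $(w,w)$ entry is exactly the determinant of $M$ with the $w$ row and column deleted, which is block diagonal with blocks $xI - A(G - g)$ and $xI - A(H - h)$; hence that cofactor equals $P_{G-g}(x)\,P_{H-h}(x)$, contributing the term $x\,P_{G-g}(x)\,P_{H-h}(x)$. The remaining cofactors, coming from the entries $-1$ at neighbors of $w$, are the place where the real bookkeeping lives.

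The key step is to recognize the sum over the $G$-neighbor cofactors and the $H$-neighbor cofactors separately. Consider the contribution of all entries of the $w$-row that lie in the $G$-side: summing $(-1)$ times those entries against their cofactors, together with the already-identified $x$-term, should reassemble into a determinant over the full $G$-block. Concretely, I would argue that expanding $\det(M)$ along the $w$-row but restricting attention to the $G$-part of that row, while holding the $H$-block $xI - A(H-h)$ as a constant factor, produces $P_G(x)\,P_{H-h}(x)$; symmetrically, restricting to the $H$-part of the $w$-row produces $P_{G-g}(x)\,P_H(x)$. The cleanest way to make this rigorous is to write $P_F(x) = \det M$ and, using multilinearity of the determinant in the $w$-row, decompose that row as (diagonal $x e_w$) $+$ ($G$-neighbor part) $+$ ($H$-neighbor part), giving three determinants; then one identifies the $G$-neighbor determinant plus part of the diagonal term as $P_G(x) P_{H-h}(x)$, the $H$-neighbor determinant plus part of the diagonal term as $P_{G-g}(x) P_H(x)$, and correcting for the diagonal term $x$ being counted twice produces the subtracted $x\,P_{G-g}(x)\,P_{H-h}(x)$. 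This inclusion–exclusion accounting of the diagonal $x$ is precisely why the third term appears with a minus sign.

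The main obstacle is making the splitting of the $w$-row cofactors into a clean $P_G$ factor and a $P_H$ factor fully rigorous rather than heuristic, since each individual neighbor-cofactor is a determinant of a matrix with one deleted row and one deleted column lying on opposite sides of $w$, and one must track signs carefully through the Laplace expansion. I would handle this by instead proving the identity via the standard technique of expressing $\det M$ as a sum over those permutations $\sigma$ of $V(F)$ that fix $w$ plus those that move $w$: since no edge joins $V'$ to $V''$, any permutation contributing a nonzero product must either fix $w$ (yielding $x\,P_{G-g}P_{H-h}$), or move $w$ within a cycle lying entirely in the $G$-side (these permutations, when summed, rebuild $P_G P_{H-h}$ minus the $w$-fixing overcount), or symmetrically within the $H$-side. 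Organizing the permutations by which side contains the cycle through $w$ gives the three terms directly and transparently explains the sign of the subtracted term as the removal of the double-counted identity-on-$w$ permutations. A careful verification on a small base case, say $G = H = K_2$ coalesced at a vertex (so $F = P_3$), confirms the formula and the signs before committing to the general permutation argument.
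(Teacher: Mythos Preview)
Your proposal is correct. The paper does not prove this lemma at all: it is stated with a citation to Schwenk~\cite{Schwenk1974} and used as a black box, so there is no in-paper argument to compare against. Your permutation/multilinearity approach is the standard one and is sound; in particular, writing the $w$-row of $M=xI-A(F)$ as $r_G+r_H-r_0$ with $r_G=(x,-a_G,0)$, $r_H=(x,0,-a_H)$, $r_0=(x,0,0)$ and then observing (via the permutation expansion, exactly as you outline) that the resulting three determinants factor as $P_G P_{H-h}$, $P_{G-g}P_H$, and $xP_{G-g}P_{H-h}$ gives the formula directly. Your caution about the individual neighbor-cofactors not factoring cleanly is well placed, and your pivot to the permutation / row-multilinearity viewpoint is the right fix; the inclusion--exclusion reading of the $-xP_{G-g}P_{H-h}$ term is exactly what makes the bookkeeping transparent.
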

More details on Lemmas~\ref{lem: subgraph poly} and~\ref{lem: Schwenk} can be found in~\cite{CRS-1997}.
%\subsubsection*{Two technical lemmas}

The following two technical lemmas will play an important role in order to prove the main result of this article.

\begin{lemma}\label{lem: first technical lemma}
	Let $H$ be a graph, let $v,w\in V(H)$ such that $H-w\prec H-v$ and let $G$ be a connected graph. If $G_1$ and $G_2$ are the graphs obtained by means of the coalescence between $G$ and $H$ at $u\in V(G)$, and $v$ or $w$ respectively, then $G_1\prec G_2$. 
\end{lemma}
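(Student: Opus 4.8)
The plan is to compute $P_{G_2}(x)-P_{G_1}(x)$ explicitly, factor it, and read off the sign of each factor on $[\rho(G_1),\infty)$. Write $G_1=G\cdot_u^{v}H$ and $G_2=G\cdot_u^{w}H$. First I would apply Schwenk's coalescence formula (Lemma~\ref{lem: Schwenk}) to each of $G_1$ and $G_2$:
\begin{align*}
P_{G_1}(x)&=P_G(x)P_{H-v}(x)+P_{G-u}(x)P_H(x)-xP_{G-u}(x)P_{H-v}(x),\\
P_{G_2}(x)&=P_G(x)P_{H-w}(x)+P_{G-u}(x)P_H(x)-xP_{G-u}(x)P_{H-w}(x).
\end{align*}
The term containing $P_H$ is common to both, so subtracting yields the clean factorization
\[
P_{G_2}(x)-P_{G_1}(x)=\bigl(xP_{G-u}(x)-P_G(x)\bigr)\bigl(P_{H-v}(x)-P_{H-w}(x)\bigr),
\]
and the goal reduces to showing that both factors are strictly positive for every $x\ge\rho(G_1)$; this gives $G_1\prec G_2$ by Definition~\ref{rmk: polynomial comparison}.

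For the factor $P_{H-v}(x)-P_{H-w}(x)$ I would argue as follows. The hypothesis $H-w\prec H-v$ says exactly that this difference is positive for $x\ge\rho(H-w)$, so it suffices to check $\rho(H-w)<\rho(G_1)$. Note that $v\ne w$ (otherwise $H-w\prec H-v$ would be false, as $\prec$ is strict), hence $H$ is non-trivial; in $G_1$ the graph $H$ sits as an induced subgraph, so $H-w$ is a proper subgraph of $G_1$, and Lemma~\ref{lem: subgraph spectral radius} gives $\rho(H-w)<\rho(G_1)$. (Here we implicitly use that $G$ is non-trivial, which holds in all our applications; if $G$ is trivial then $G_1=G_2$ and there is nothing to prove.)

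The main obstacle is controlling the sign of the factor $xP_{G-u}(x)-P_G(x)$. My plan is to invoke the vertex-deletion version of Schwenk's formula,
\[
xP_{G-u}(x)-P_G(x)=\sum_{z\in N_G(u)}P_{G-u-z}(x)+2\sum_{C}P_{G-V(C)}(x),
\]
where $C$ runs over the cycles of $G$ through $u$ (another of the formulas in \cite{Schwenk1974}; see also \cite{CRS-1997}). Every graph on the right-hand side is obtained from $G$ by deleting at least the vertex $u$, so it is a proper subgraph of the connected graph $G$, hence of $G_1$; by Lemma~\ref{lem: subgraph spectral radius} its spectral radius is strictly below $\rho(G_1)$, and therefore its characteristic polynomial is positive on $[\rho(G_1),\infty)$. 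Since $G$ is connected and non-trivial, $N_G(u)\ne\emptyset$, so the right-hand side is a non-empty sum of strictly positive terms, and the factor is positive on $[\rho(G_1),\infty)$. A self-contained alternative for this sign is a Schur-complement identity: for $x>\rho(G-u)$ one has $P_G(x)/P_{G-u}(x)=x-b^{\top}(xI-A(G-u))^{-1}b$, where $b$ is the characteristic vector of $N_G(u)$, and the subtracted quantity is positive because $(xI-A(G-u))^{-1}$ has non-negative entries and positive diagonal while $b\ne 0$. Either way, combining the two signs gives $P_{G_2}(x)>P_{G_1}(x)$ for all $x\ge\rho(G_1)$, i.e.\ $G_1\prec G_2$.
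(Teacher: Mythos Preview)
Your proof is correct and follows the same overall strategy as the paper: apply Schwenk's formula to $G_1$ and $G_2$, subtract, factor as a product of two differences, and check the sign of each factor on $[\rho(G_1),\infty)$. The only real difference is in how you handle the factor $xP_{G-u}(x)-P_G(x)$. The paper simply observes that $(G-u)^+$ is a proper spanning subgraph of the connected graph $G$, so Lemma~\ref{lem: subgraph poly} immediately gives $G\prec(G-u)^+$, i.e.\ $xP_{G-u}(x)>P_G(x)$ for all $x\ge\rho(G)$; no vertex-deletion/cycle expansion or Schur-complement computation is needed. Your two alternatives for this sign are valid but heavier than necessary, and the first one imports an additional Schwenk identity not otherwise used in the paper. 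The treatment of the other factor via the hypothesis $H-w\prec H-v$ together with $\rho(H-w)<\rho(G_1)$ from Lemma~\ref{lem: subgraph spectral radius} is essentially identical in both proofs, and your remark about the trivial-$G$ edge case is a fair caveat that the paper leaves implicit.
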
	

\begin{proof}
	By Lemma~\ref{lem: Schwenk}, the characteristic polynomial of $G_1$ and $G_2$ are
	\[P_{G_1}(x)=P_{G-u}(x)P_{H}(x)+(P_G(x)-xP_{G-u}(x))P_{H-v}(x)\]
	and
	\[P_{G_2}(x)=P_{G-u}(x)P_{H}(x)+(P_G(x)-xP_{G-u}(x))P_{H-w}(x)\]
	respectively, and thus
	\begin{equation}\label{eq: primer lema tecnico}
		P_{G_2}(x)-P_{G_1}(x)=(P_G(x)-xP_{G-u}(x))(P_{H-w}(x)-P_{H-v}(x)).
	\end{equation}
	By Lemmas~\ref{lem: subgraph spectral radius} and~\ref{lem: subgraph poly}, $G\prec (G-u)^+$. Therefore, since $H-w\prec H-v$, by Lemma~\ref{lem: subgraph spectral radius}  and~\eqref{eq: primer lema tecnico} we have $G_1\prec G_2$. 	
\end{proof}
%
%\textcolor{blue}{The below lemma is illustrated in Fig.~\ref{fig: prop2}.}
%
\begin{lemma}\label{lem: second technical lemma}
	Let $H_1,\, H_2$ be two graphs such that either $H_1=H_2$ or $H_1\prec H_2$, let $v_i\in V(H_i)$ for each $i=1,2$ such that $H_2-v_2\prec H_1-v_1$, and let $G$ be a connected graph. If $G_i$ is the graph obtained by means of the coalescence between $G$ and $H_i$ at $v\in V(G)$ and $v_i$ for each $i=1,2$, then $G_1\prec G_2$. 
\end{lemma}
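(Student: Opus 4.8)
The plan is to mimic the proof of Lemma~\ref{lem: first technical lemma}, applying Schwenk's formula (Lemma~\ref{lem: Schwenk}) to each coalescence $G_i = G \cdot_{v}^{v_i} H_i$ and comparing the resulting characteristic polynomials on the interval $x \ge \rho(G_1)$. First I would write, for $i=1,2$,
\[
P_{G_i}(x) = P_{G-v}(x)\,P_{H_i}(x) + \bigl(P_G(x) - x\,P_{G-v}(x)\bigr)\,P_{H_i-v_i}(x),
\]
and then subtract to obtain
\[
P_{G_2}(x) - P_{G_1}(x) = P_{G-v}(x)\bigl(P_{H_2}(x) - P_{H_1}(x)\bigr) + \bigl(P_G(x) - x\,P_{G-v}(x)\bigr)\bigl(P_{H_2-v_2}(x) - P_{H_1-v_1}(x)\bigr).
\]
The goal is to show this difference is strictly positive for all $x \ge \rho(G_1)$, which gives $G_1 \prec G_2$ by Definition~\ref{rmk: polynomial comparison}.

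Next I would control the sign of each factor on the relevant range. Since $G$ is connected, Lemmas~\ref{lem: subgraph spectral radius} and~\ref{lem: subgraph poly} give $G \prec (G-v)^+$, i.e. $P_{(G-v)^+}(x) = x\,P_{G-v}(x) > P_G(x)$ for all $x \ge \rho(G)$, so the coefficient $P_G(x) - x\,P_{G-v}(x)$ is negative there. Also $P_{G-v}(x) > 0$ for $x$ at least $\rho(G-v)$, hence certainly for $x \ge \rho(G)$. By hypothesis $H_2 - v_2 \prec H_1 - v_1$, so $P_{H_2-v_2}(x) - P_{H_1-v_1}(x) > 0$ for all $x \ge \rho(H_1-v_1)$; combined with the negative coefficient, the second summand is negative where it is controlled. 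The first summand: if $H_1 = H_2$ it vanishes; if $H_1 \prec H_2$ then $P_{H_2}(x) - P_{H_1}(x) > 0$ for $x \ge \rho(H_1)$, and since $P_{G-v}(x) > 0$ this summand is positive. So the two summands pull in opposite directions and a naive sign analysis is not enough — this is the main obstacle.

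To resolve it I would instead group the terms differently, pairing $P_{H_i}$ with $P_{H_i - v_i}$ inside a single bracket. Rewrite
\[
P_{G_i}(x) = P_G(x)\,P_{H_i-v_i}(x) + P_{G-v}(x)\bigl(P_{H_i}(x) - x\,P_{H_i-v_i}(x)\bigr),
\]
and note that $P_{H_i}(x) - x\,P_{H_i-v_i}(x) = -P_{(H_i-v_i)^+}(x) + P_{H_i}(x)$, which by Lemma~\ref{lem: subgraph poly} applied within $H_i$ (when $H_i$ is connected) is negative for $x \ge \rho(H_i)$; more robustly, this quantity equals $P_{H_i \cdot_{v_i}^{\bullet} K_1}$-type corrections and can be compared directly. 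The clean way: subtract to get
\[
P_{G_2}(x) - P_{G_1}(x) = P_G(x)\bigl(P_{H_2-v_2}(x) - P_{H_1-v_1}(x)\bigr) + P_{G-v}(x)\Bigl[\bigl(P_{H_2}(x) - P_{H_1}(x)\bigr) - x\bigl(P_{H_2-v_2}(x) - P_{H_1-v_1}(x)\bigr)\Bigr].
\]
Since $P_G(x) > 0$ and $P_{H_2-v_2}(x) - P_{H_1-v_1}(x) > 0$ on the range, the first summand is positive. For the bracket, the hypothesis that either $H_1 = H_2$ (bracket becomes $-x(P_{H_2-v_2} - P_{H_1-v_1}) < 0$, inconvenient) or $H_1 \prec H_2$ must be used together with $H_2 - v_2 \prec H_1 - v_1$; I would argue that $H_1 \prec H_2$ plus the $v_i$-deletion comparison forces $P_{H_2} - x P_{H_2-v_2} \ge P_{H_1} - x P_{H_1 - v_1}$ in the appropriate direction by recognizing these as $-P_{(H_i-v_i)^+}(x) + \text{(edge contributions)}$. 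The decisive point is to verify that $\rho(G_1) \ge \max\{\rho(G), \rho(H_1), \rho(H_1 - v_1)\}$ — true since $G$, $H_1$, $H_1 - v_1$ are all (isomorphic to) subgraphs of $G_1$ by Lemma~\ref{lem: subgraph spectral radius} — so that all the sign estimates above are simultaneously valid on $x \ge \rho(G_1)$. Assembling the positive first summand with a careful treatment of the bracket (splitting the two cases $H_1 = H_2$ and $H_1 \prec H_2$, and in the equality case reducing to Lemma~\ref{lem: first technical lemma}) yields $P_{G_2}(x) > P_{G_1}(x)$ for all $x \ge \rho(G_1)$, hence $G_1 \prec G_2$.
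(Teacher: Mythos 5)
Your opening is exactly the paper's route: apply Lemma~\ref{lem: Schwenk} to each coalescence and subtract, obtaining
\[
P_{G_2}(x)-P_{G_1}(x)=\bigl(P_G(x)-xP_{G-v}(x)\bigr)\bigl(P_{H_2-v_2}(x)-P_{H_1-v_1}(x)\bigr)+\bigl(P_{H_2}(x)-P_{H_1}(x)\bigr)P_{G-v}(x),
\]
which is precisely the paper's identity~\eqref{eq: ecuacion segundo lema tecnico}. But then you misread Definition~\ref{rmk: polynomial comparison}: $A\prec B$ means $P_B(x)>P_A(x)$ on the range, so the hypothesis $H_2-v_2\prec H_1-v_1$ gives $P_{H_2-v_2}(x)-P_{H_1-v_1}(x)<0$, not $>0$ as you assert. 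With the correct sign there is no tension between the two summands at all: the factor $P_G(x)-xP_{G-v}(x)$ is strictly negative for $x\ge\rho(G)$ (this is your own observation that $G\prec(G-v)^+$), so the first summand is a product of two negative quantities and is strictly positive, while the second summand is zero when $H_1=H_2$ and nonnegative when $H_1\prec H_2$ (since $P_{H_2}-P_{H_1}>0$ and $P_{G-v}\ge 0$ there). Hence $P_{G_2}(x)>P_{G_1}(x)$ on the relevant range and $G_1\prec G_2$ follows at once --- this two-line sign analysis is the paper's entire proof, and it is exactly the argument you abandoned because of the sign slip (your ``main obstacle'' is a phantom).

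The replacement you propose does not repair this. The regrouped identity is algebraically fine, but its analysis inherits the same sign error (you again claim $P_{H_2-v_2}-P_{H_1-v_1}>0$, making the ``positive first summand'' actually negative), and the decisive step --- controlling the bracket $\bigl(P_{H_2}-P_{H_1}\bigr)-x\bigl(P_{H_2-v_2}-P_{H_1-v_1}\bigr)$ --- is only gestured at (``I would argue\ldots'', ``can be compared directly''); no proof is given, it does not follow from the stated hypotheses in any evident way, and with the corrected signs it is not needed. Your one genuinely useful addition is the remark about the validity range: the inequalities must hold for all $x\ge\rho(G_1)$, and noting that $G$, $H_1$ and $H_1-v_1$ are subgraphs of $G_1$ covers most of this (though not $\rho(H_2-v_2)$, the quantity that actually indexes the hypothesis $H_2-v_2\prec H_1-v_1$; the paper is silent on this point as well). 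In short: same starting identity as the paper, but the argument as written is wrong; fix the direction of $\prec$ and the direct sign analysis you started with finishes immediately.
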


\begin{proof}
	By applying Lemma~\ref{lem: Schwenk} as in Lemma~\ref{lem: first technical lemma} we obtain
	\begin{equation}\label{eq: ecuacion segundo lema tecnico}
	\begin{split}
		P_{G_2}(x)-P_{G_1}(x)&=(P_G(x)-xP_{G-v}(x))(P_{H_2-{v_2}}(x)-P_{H_1-{v_1}}(x))\\
		&+(P_{H_2}(x)-P_{H_1}(x))P_{G-v}(x).
	\end{split}
	\end{equation}
	By Lemmas~\ref{lem: subgraph spectral radius} and~\ref{lem: subgraph poly}, $G\prec (G-v)^+$. Therefore, since either $H_1=H_2$ or $H_1\prec H_2$ and $H_2-v_2\prec H_1-v_1$, by~\eqref{eq: ecuacion segundo lema tecnico} and Lemma~\ref{lem: subgraph spectral radius} we conclude that $G_2\prec G_1$.
\end{proof}

\subsubsection*{Tools for finding the maximum}

%In the following lemma  we consider a set of vertices $u_1,\ldots,u_{\ell}$ of a graph $G$, where $x_i$ stands for $x_{u_i}$, the corresponding coordinate of $u_i$ in the principal eigenvector, for every $1\le i\le\ell$. %We apply this result to prove that the maximum radius of a graph in $\B{n}$ with $n-1>q$.
%
%\begin{proposition}\label{prop: maximum spectral radius}\cite{LuLin2015,CDG2020}
%	Let $G$ be a connected graph and let $u_1,\ldots,u_k,u_{k+1},\ldots,u_{\ell}$ vertices of $G$ such that
%	$\sum_{i=1}^k x_i\le \sum_{i=k+1}^{\ell} x_i$, and let $W\subseteq V(G)\setminus \{u_1,\ldots,u_{\ell}\}$. If $\{u_1,\ldots,u_k\}$ is complete to $W$ and $\{u_{k+1},\ldots,u_{\ell}\}$ is anticomplete to $W$, then 
%	\[\rho(G)<\rho(G-\{wu_i: w\in W\mbox{ and }1\le i\le k\}+\{wu_i: w\in W\mbox{ and }k+1\le i\le\ell\}).\]
%\end{proposition}
%
%Proposition~\ref{prop: maximum spectral radius} has also shown to be of help to find the unique graph with maximum spectral radius of block graphs with prescribed independence number~\cite{CDG2020}. 
The below theorem was proved in the context of studying the spectral radius of the  adjacency matrix of the $q+1$-uniform hypertrees. This matrix agrees with that of a block graph having all their blocks of size $q+1$. 
\begin{theorem}~\cite[Theorem 4.1]{Linetal2017}\label{thm:extremal radius maximum}
	If $G\in\B{n}$, then $\rho(G)\le\rho(S(n,q))=\frac{q-1+\sqrt{(q-1)^2+4(n-1)}}{2}$. Besides, $S(n,q)$ is the unique graph maximizing $\rho(G)$.
\end{theorem}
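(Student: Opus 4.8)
The maximum part of the statement is nothing new: as the authors observe just above, the adjacency matrix of a graph in $\B{n}$ coincides with that of a $(q+1)$-uniform hypertree, so both the bound $\rho(G)\le\rho(S(n,q))$ and the uniqueness of $S(n,q)$ as maximizer are exactly the content of Theorem~\ref{thm:extremal radius maximum} (\cite[Theorem 4.1]{Linetal2017}) and may be quoted verbatim. All the work lies in the minimum part, and I would prove it by producing, for every $G\neq\camino{b}$ in the class, a graph $G'\in\B{n}$ obtained from $G$ by one of the transformations of Section~\ref{sec: graphs transformations} with $G\prec G'$. Since $\prec$ is transitive in this direction (on $[\rho(G),\infty)$ the strict inequalities compose), iterating such moves produces a chain $G\prec G_1\prec\cdots\prec\camino{b}$, and the observation following Definition~\ref{rmk: polynomial comparison} then gives $\rho(\camino{b})<\rho(G)$. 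This single chain delivers both the lower bound and the uniqueness of the minimizer at once.

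The transformations are the block-graph analogues of the straightening move of Lov\'asz and Pelik\'an for trees. Because any block graph is assembled by repeatedly coalescing a $K_{q+1}$ onto an existing block graph, I would locate the offending local structure---a cut vertex lying in three or more blocks, or a block carrying more cut vertices than the two permitted along a path-block---and present the graph as a coalescence $G\cdot H$ with $G$ a fixed core and $H$ the piece to be relocated. The two technical lemmas are built precisely for this: Lemma~\ref{lem: first technical lemma} compares attaching the same piece $H$ at two vertices $v,w$ and reduces the graph inequality to the single polynomial comparison $H-w\prec H-v$, while Lemma~\ref{lem: second technical lemma} compares attachments of two pieces $H_1,H_2$ and reduces it to $H_1\prec H_2$ (or $H_1=H_2$) together with $H_2-v_2\prec H_1-v_1$. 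Each move is therefore certified by a comparison of characteristic polynomials of the explicit small graphs $H,\,H-v,\,H-w$ that arise, which I would compute through the Schwenk formula of Lemma~\ref{lem: Schwenk}.

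Two further points need attention. For termination I would order $\B{n}$ by a complexity measure of the block--cut tree---for instance counting together the cut vertices that lie in more than two blocks and the blocks that contain more than two cut vertices---and verify that each transformation strictly lowers this measure, so that the process halts, with $\camino{b}$ the unique graph admitting no further move. The hypothesis on the cut vertices enters exactly here: since three pairwise adjacent cut vertices necessarily lie in one complete block, ``at most three pairwise adjacent cut vertices'' means no block contains more than three cut vertices, which confines the local configurations to be straightened to a bounded list that the two transformations can cover. This is the precise place where the restriction is used, and hence the reason the unconditional version is only conjectured.

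The main obstacle I anticipate is the base polynomial comparisons $H-w\prec H-v$ (respectively $H_2-v_2\prec H_1-v_1$) for the concrete pieces produced by each move. These are strict inequalities between explicit polynomials that must hold on the whole half-line $[\rho(G),\infty)$, not merely at $\rho(G)$, and pinning down the sign of the difference throughout that interval---rather than at a single point---is where the bookkeeping with Schwenk's formula becomes delicate. A secondary subtlety is maintaining the invariants along the chain: each intermediate graph must remain in $\B{n}$, with the same vertex count and all blocks of size $q+1$, and must continue to satisfy the cut-vertex hypothesis, so that the technical lemmas stay applicable at every step.
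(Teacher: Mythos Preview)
You have misidentified the statement. Theorem~\ref{thm:extremal radius maximum} is a \emph{cited} result from \cite{Linetal2017}; it concerns only the maximum and has no ``minimum part'' whatsoever. The paper does not prove it---it simply quotes it as a tool. Your opening sentence already handles the entire statement correctly (it may indeed be quoted verbatim from the reference), and everything from ``All the work lies in the minimum part'' onward is addressing a different theorem, namely Theorem~\ref{thm:extremal radius}.

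Incidentally, your sketch of the minimum argument is broadly aligned with how the paper proves Theorem~\ref{thm:extremal radius}: Lemma~\ref{lem: special blocks} supplies the dichotomy (either $G$ is a path-block, or it has a special block of type one or two), and Propositions~\ref{prop: pendant pathas at the same vertex} and~\ref{prop: pendant path bocks at distinct vertices} provide the two straightening moves, each strictly decreasing the spectral radius and reducing the number of special blocks until $\camino{b}$ is reached. But none of that belongs to the statement you were asked about.
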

The reader should notice that although the minimum spectral radius of a $q+1$-uniform hypertrees was also considered in the literature (see e.g. \cite[Corollary 19]{Zhouetal-2014}) but under the notion of spectral radius of the hypermatrix, also called tensor, associated to a uniform hypertree that does not have an immediate relationship with the notion of the spectral radius of the adjacency matrix of a block graph.

\section{Graph transformations}\label{sec: graphs transformations}
%In the sequel we consider that the complete graph $K_r$ is the empty graph when $r=0$. Let $G$ be a graph and let $u,v\in V(G)$ and let $r$ and $s$ two nonnegative integers.
%Th. 4.1
\begin{figure}
	\centering
	\includegraphics[scale=0.5]{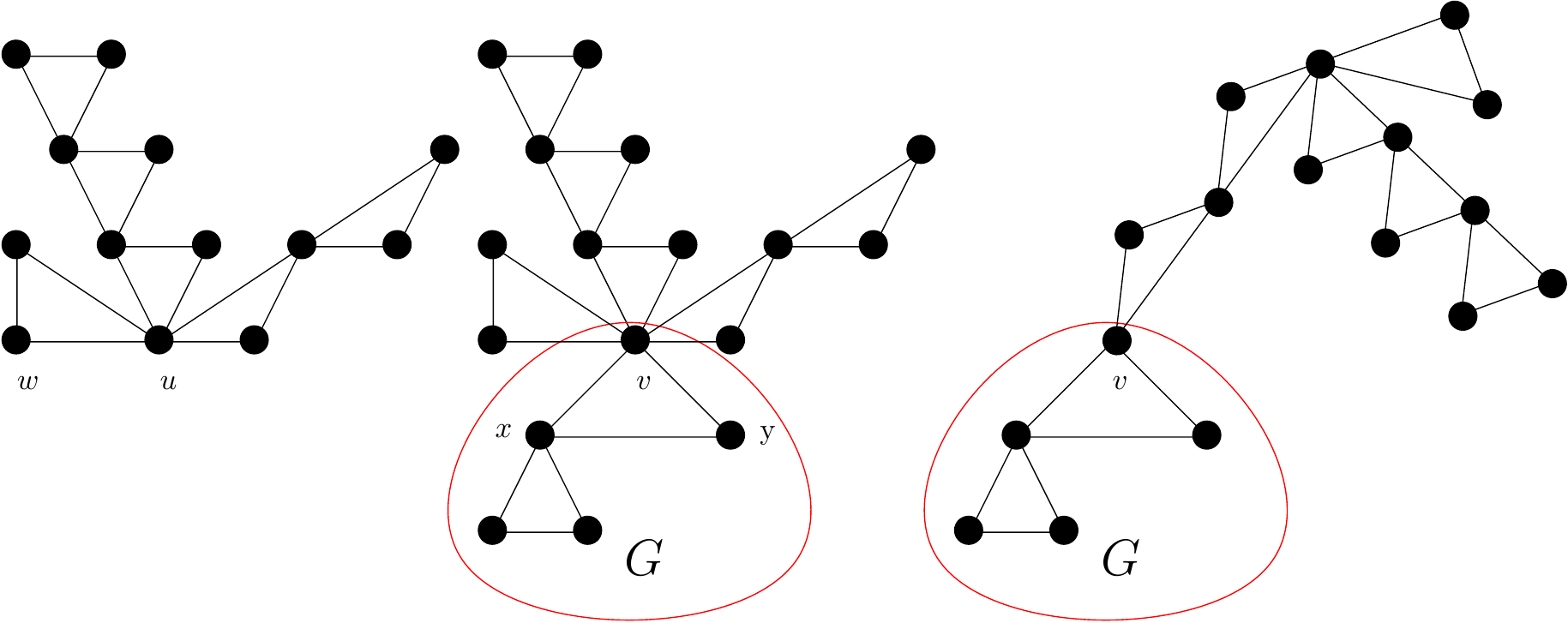}   
	\caption{From left to right $H$, $H_1$ and $H_2$.}
	\label{fig: prop2}
\end{figure}
To ease the reading of the next proposition we recommend to see Fig.~\ref{fig: prop2}.
\begin{proposition}\label{prop: pendant pathas at the same vertex}
	Let $G$ be a connected graph, and let $u\in V(G)$ such that $G-u$ is connected. Let $H$ be the graph obtained from $S(k(q-1)+1,k)$ by adding for all $1\le i \le k$ one pendant $(q,b_i)$-path-block (possible empty, i.e., $b_i=0$) to each leaf block,  let $v\in V(H)$ be the vertex of degree $k(q-1)$, and let $w\in V(H)$ be any vertex in  leaf block of $H$. If $H_1$ is the graph obtained by the coalescence between $G$ and $H$ at $u$ and $v$, $H_2$ is the graph obtained by the coalescence between $G$ and $H$ at $u$ and $w$, then $H_1\prec H_2$ .
\end{proposition}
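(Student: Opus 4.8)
By Lemma~\ref{lem: first technical lemma}, the whole statement is equivalent to a single polynomial comparison internal to $H$. Indeed, $H_1$ and $H_2$ are obtained by coalescing the \emph{same} connected graph $G$, at the \emph{same} vertex $u$, with $H$ at $v$ (resp.\ at $w$); so this is exactly the situation of Lemma~\ref{lem: first technical lemma}, and it would give $H_1\prec H_2$ as soon as we know $H-w\prec H-v$. For the record (this is the computation behind Lemma~\ref{lem: first technical lemma}), Schwenk's formula yields
\[
P_{H_2}(x)-P_{H_1}(x)=\bigl(P_G(x)-xP_{G-u}(x)\bigr)\bigl(P_{H-w}(x)-P_{H-v}(x)\bigr),
\]
and since $(G-u)^+$ is a proper spanning subgraph of the connected graph $G$ we have $G\prec(G-u)^+$ by Lemmas~\ref{lem: subgraph spectral radius} and~\ref{lem: subgraph poly}, so the first factor is negative on $[\rho(G),\infty)\supseteq[\rho(H_1),\infty)$; as $H-w$ is a proper subgraph of $H_1$ one also has $\rho(H_1)>\rho(H-w)$, so the second factor is negative there once $H-w\prec H-v$. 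Thus everything reduces to proving $H-w\prec H-v$ for this particular $H$, $v$, $w$.

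The plan for $H-w\prec H-v$ is an induction that peels the bush $H$ toward configurations already controlled. Observe that $H-v$ is a disjoint union of ``arms'' (each clique $B_i\setminus\{v\}$ together with its pendant path-block $\camino{b_i}$), whereas $H-w$ is connected. Writing $w$ in the leaf block of the arm carrying $\camino{b_1}$, I would decompose $H=H'\cdot_{z_1}\camino{b_1}$, with $z_1$ the attachment vertex and $H'$ (connected, containing $v$) the rest of the bush; then $H-v=(H'-v)\cdot_{z_1}\camino{b_1}$ and $H-w=H'\cdot_{z_1}(\camino{b_1}-w)$. Applying Schwenk's formula (Lemma~\ref{lem: Schwenk}) to both, substituting the clique contributions via $P_{K_m}(x)=(x-m+1)(x+1)^{m-1}$, and using $H'\prec(H'-v)^+$ together with the end-reflection symmetry of the path-block (so that $P_{\camino{b_1}-e_1}=P_{\camino{b_1}-w}$), the aim is to rewrite $P_{H-v}(x)-P_{H-w}(x)$ as a product/combination of characteristic polynomials of proper subgraphs of $H-w$, which is then nonnegative on $[\rho(H-w),\infty)$ and strictly positive at $\rho(H-w)$. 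The induction (peeling one block of $\camino{b_1}$ at a time) would be propagated by the second technical lemma (Lemma~\ref{lem: second technical lemma}), and the base cases — the bush with all $b_i=0$, i.e.\ $H=S(k(q-1)+1,k)$, and the one-arm cases $b_1\le 1$ — would be disposed of by a direct computation with the explicit clique polynomials (factoring out the common powers of $x+1$).

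The hard part is precisely the comparison $H-w\prec H-v$, and the reason it is delicate is that it is an inequality of characteristic polynomials on the \emph{entire} half-line $x\ge\rho(H-w)$, not merely the inequality $\rho(H-v)<\rho(H-w)$; any crude estimate (for instance bounding $P_{H'}(x)$ by $xP_{H'-v}(x)$ before simplifying) only produces ``$\ge$''. So the Schwenk expansion has to be carried through accurately enough that the difference collapses to a manifestly nonnegative form, which is exactly where the combined use of Lemmas~\ref{lem: subgraph poly},~\ref{lem: first technical lemma} and~\ref{lem: second technical lemma} is needed. Finally, I would keep an eye on the small/degenerate configurations in which $v$ and $w$ sit in symmetric positions of $H$ (forcing $H_1\cong H_2$): these have to be excluded from the statement, and they do not arise in the applications of Section~\ref{sec: main result}.
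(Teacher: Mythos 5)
Your reduction is exactly the paper's: since the same $G$ and the same vertex $u$ are used on both sides, Lemma~\ref{lem: first technical lemma} reduces the whole proposition to the single comparison $H-w\prec H-v$, and that part of your argument (including the sign analysis of the two factors) is correct. The genuine gap is that you never prove $H-w\prec H-v$: what you offer is a plan --- peel off the pendant path-block carrying $w$, expand with Schwenk's formula, hope the difference collapses to a manifestly nonnegative combination, settle base cases by explicit clique polynomials --- with no computation carried out, no induction hypothesis formulated, and the decisive step stated as an ``aim''. As written, the heart of the proposition is left unestablished.

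Moreover, the claim that this comparison is delicate is mistaken; the paper settles it in one line, and that is the observation you are missing. Since at least two blocks of $H$ meet at $v$, the graph $H-v$ is the disjoint union of the arms hanging at $v$, while $H-w$ is connected (as $w$ is a noncut vertex of a leaf block), and both graphs have the same number of vertices. In fact $H-v$ is isomorphic to a spanning subgraph of $H-w$: map the arm of $H-v$ that contained $w$ onto the chain of blocks of $H-w$ formed by that same arm (with $w$ deleted) together with $v$, read from the outer end inward --- the two chains are isomorphic, because deleting $v$ from the innermost block and deleting $w$ from the outermost block produce block chains of the same sizes in reverse order --- and map every other arm of $H-v$ identically onto its copy inside $H-w$, discarding the edges at $v$ joining it to those arms. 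The image is disconnected, hence a proper spanning subgraph of the connected graph $H-w$, so Lemma~\ref{lem: subgraph poly} gives $H-w\prec H-v$ at once, with strictness built in. In particular your concern that symmetric positions of $v$ and $w$ might force $H_1\cong H_2$ and require excluding cases does not arise in this configuration: removing $v$ disconnects $H$ while removing $w$ does not, so the strict inequality $\rho(H_2)<\rho(H_1)$ always holds here.
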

\begin{proof}
	Observe that $H-w$ is  connected and $H-v$ is a disconnected spanning subgraph of $H-w$. Thus, by Lemma~\ref{lem: subgraph poly} $H-w\prec H-v$. Therefore, the result follows from Lemma~\ref{lem: first technical lemma}.
\end{proof}
The following proposition play a central role to prove the main result of this article. We use $G(r,s)$ to denote the graph obtained by means of the coalescence between $G$ and a copy of $K_r$ at $u\in V(G)$ and any vertex of the complete graph, and between $G$ and $K_s$ at $v\in V(G)$ and any vertex of the complete graph (see the example depicted in Fig.~\ref{fig: pendant cliques}).
\begin{figure}
	\centering
	\includegraphics[scale=0.5]{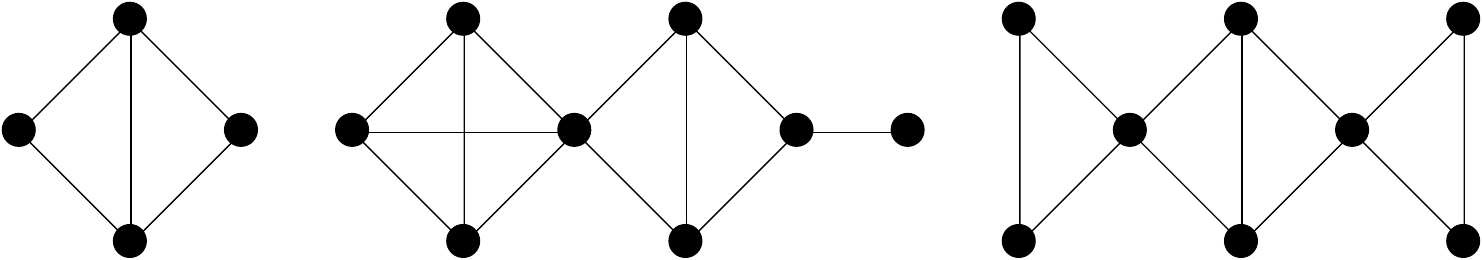}   
	\caption{From left to right $G$, $G(4,1)$ and $G(3,3)$.}
	\label{fig: pendant cliques}
\end{figure}
\begin{proposition}\label{prop: pendant cliques}
	Let $G$ be a connected graph and let $u,v\in V(G)$. If $r$ and $s$ are two integers such that $1 \le r\le s-2$, $G-u\prec G-v$ or $G-u=G-v$, then $G(r,s)\prec G(r+1,s-1)$. 
\end{proposition}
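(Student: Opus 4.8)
The plan is to write down $P_{G(r,s)}(x)$ and $P_{G(r+1,s-1)}(x)$ explicitly via Schwenk's formula and then compare the two polynomials directly on $[\rho(G(r,s)),\infty)$. Throughout we may assume $u\neq v$, which is the case that matters. Set $p_m:=P_{K_m}(x)=(x-m+1)(x+1)^{m-1}$ and $q_m:=P_{K_m}(x)-xP_{K_{m-1}}(x)$; from the closed form of $p_m$ one checks at once that $q_m=-(m-1)(x+1)^{m-2}$ (so in particular $q_1=0$).

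First I would apply Lemma~\ref{lem: Schwenk} twice. Coalescing $G$ with a copy of $K_r$ at $u$ gives a graph $F$ with $P_F(x)=P_G(x)\,p_{r-1}+P_{G-u}(x)\,q_r$, and, since $F-v=(G-v)\cdot K_r$ (here $u\neq v$ enters), also $P_{F-v}(x)=P_{G-v}(x)\,p_{r-1}+P_{G-u-v}(x)\,q_r$. Coalescing $F$ with a copy of $K_s$ at $v$ and expanding then yields
\[
P_{G(r,s)}(x)=P_G(x)\,p_{r-1}p_{s-1}+P_{G-u}(x)\,q_rp_{s-1}+P_{G-v}(x)\,p_{r-1}q_s+P_{G-u-v}(x)\,q_rq_s,
\]
and the expression for $P_{G(r+1,s-1)}(x)$ is obtained from this by the substitution $(r,s)\mapsto(r+1,s-1)$.

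Next I would subtract and simplify. A routine factorisation (the only delicate points being the degenerate values $r=1$, where $q_1=0$, and $r+s=4$, where $(x+1)^{r+s-4}=1$) shows that the four coefficient differences $p_rp_{s-2}-p_{r-1}p_{s-1}$, $q_{r+1}p_{s-2}-q_rp_{s-1}$, $p_rq_{s-1}-p_{r-1}q_s$ and $q_{r+1}q_{s-1}-q_rq_s$ equal $(x+1)^{r+s-4}$ times, respectively, $k-1$, $k-2-x$, $k+x$ and $k-1$, where $k:=s-r\ge 2$. Writing all polynomials below as functions of $x$, this gives
\[
P_{G(r+1,s-1)}-P_{G(r,s)}=(x+1)^{r+s-4}\bigl[(k-1)(P_G+P_{G-u-v})+(k-2-x)P_{G-u}+(k+x)P_{G-v}\bigr].
\]

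Finally I would show that the bracket is positive for every $x\ge\rho(G(r,s))$. Because $G(r,s)$ is connected and $G$, $G-u$, $G-v$ and $G-u-v$ are proper subgraphs of it, Lemma~\ref{lem: subgraph spectral radius} makes the spectral radius of each of these strictly smaller than $\rho(G(r,s))$, so $P_G(x)$, $P_{G-u}(x)$, $P_{G-v}(x)$ and $P_{G-u-v}(x)$ are all strictly positive at such $x$; moreover $x>-1$, so $k+x>0$ and $(x+1)^{r+s-4}>0$; and by hypothesis $P_{G-v}(x)\ge P_{G-u}(x)$. The main obstacle is that the coefficient $k-2-x$ of $P_{G-u}$ becomes negative for large $x$, so the bracket is not visibly a sum of nonnegative terms. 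The way around it is to spend part of the term $(k+x)P_{G-v}$ against it: since $k+x>0$ and $P_{G-v}\ge P_{G-u}\ge 0$ we have $(k-2-x)P_{G-u}+(k+x)P_{G-v}\ge (k-2-x)P_{G-u}+(k+x)P_{G-u}=(2k-2)P_{G-u}$, and hence the bracket is at least $(k-1)\bigl(P_G+P_{G-u-v}+2P_{G-u}\bigr)>0$ since $k-1\ge 1$. Therefore $P_{G(r+1,s-1)}(x)>P_{G(r,s)}(x)$ for all $x\ge\rho(G(r,s))$, i.e.\ $G(r,s)\prec G(r+1,s-1)$, as claimed.
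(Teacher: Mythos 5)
Your proposal is correct and takes essentially the same route as the paper: two applications of Schwenk's formula produce exactly the same difference identity (the paper groups it as $(x+1)^{r+s-4}\{(s-r-1)[P_G+P_{G-u}+P_{G-v}+P_{G-\{u,v\}}]+(x+1)(P_{G-v}-P_{G-u})\}$, which makes positivity immediate and avoids your small absorption step for the coefficient $k-2-x$), followed by the same positivity argument via Lemma~\ref{lem: subgraph spectral radius} and the hypothesis $G-u\prec G-v$ or $G-u=G-v$.
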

\begin{proof}
	By applying Lemma~\ref{lem: Schwenk} to $G(r,s)$ at $v$ we obtain
	\begin{equation}\label{eq: lema cliques 1}
		P_{G(r,s)}(x)=(x+1)^{s-2}[(x-s+2)P_{G(r,s)-K_{s-1}}(x)-(s-1)P_{G(r,s)-K_s}(x)].
	\end{equation}
	Applying again Lemma~\ref{lem: Schwenk} to $P_{G(r,s)-K_{s-1}}(x)$ and $P_{G(r,s)-K_s}(x)$ we obtain
	\begin{equation*}
	\begin{split}
	P_{G(r,s)}(x)& =(x+1)^{s+r-4}\{(x-s+2)[(x-r+2)P_G(x)-(r-1)P_{G-u}(x)]\\
	&-(s-1)[(x-r+2)P_{G-v}(x)-(r-1)P_{G-\{u,v\}}(x)]\}.
	\end{split}
	\end{equation*}
	By symmetry
	\begin{equation*}
	\begin{split}
	P_{G(r+1,s-1)}(x)& =(x+1)^{s+r-4}\{(x-s+3)[(x-r+1)P_G(x)-rP_{G-u}(x)]\\
	&-(s-2)[(x-r+1)P_{G-v}(x)-rP_{G-\{u,v\}}(x)]\}.
	\end{split}
	\end{equation*}
	Hence
	\begin{equation}\label{eq: proposicion cliques}
	\begin{split}
	P_{G(r+1,s-1)}(x)-P_{G(r,s)}(x)&=(x+1)^{s+r-4}\{(s-r-1)[P_G(x)+P_{G-u}(x)\\
	&+P_{G-v}(x)+P_{G-\{u,v\}}(x)]+(x+1)(P_{G-v}(x)\\
	&-P_{G-u}(x))\}.
	\end{split}
	\end{equation}
Therefore, if  $1\le r\le s-2$, by~\eqref{eq: proposicion cliques} and Lemma~\ref{lem: subgraph spectral radius}, $G(r,s)\prec G(r+1,s-1)$. 
%
%If $r=1$ and $3<s$, and $G$ is a nontrivial graph, the result holds again because of the Eq.~\eqref{eq: proposicion cliques} and Lemma~\ref{lem: subgraph spectral radius}. In the case in which $G$ is the trivial graph $G(1,s)=K_s$ and $G(2,s-1)$ is a proper subgraph of it. Thus the result follows from Lemma~\ref{lem: subgraph poly}. If $s=3$ and $2<|V(G)|$, the result follows from Eq.~\eqref{eq: proposicion cliques} and Lemma~\ref{lem: subgraph spectral radius}. If $|V(G)|=2$, since $G$ is connected, $G(1,3)$ is the graph $K_3$ plus a pendant vertex and $G(2,2)=P_4$, it is immediate to check that the result holds in this case. Finally, if $G$ is the trivial graph $G(1,3)=K_3$ and $G(2,2)=P_3$ and thus the result holds because of Lemma~\ref{lem: subgraph poly}.  
\end{proof}

A $(q,b)$-path-block in $\B{n}$ have blocks $B_1,\ldots,B_b$ such that $V(B_i)\cap V(B_{i+1})=\{v_i\}$ for every $1\le i\le b-1$ and $V(B_i)\cap V(B_j)=\emptyset$ whenever $1\le i<j\le n$ and $|i-j|>1$. Let $G$ be a graph and let $v,w\in V(G)$ be two adjacent vertices. We use $G[q,k,\ell]$ to denote the graph obtained by adding a pendant $(q,\ell)$-path-block at $v$ and a pendant $(q,k)$-path-block at $w$,  where $1\le\ell\le k$, and $G[q,r,0]$ stands for the graph obtained from $G$ by adding just a $(q,r)$-path block at $w$. By ``pendant at $v$'' we mean identifying a noncut vertex from one of the two leaf blocks with a noncut vertex $v\in V(B_1)$ (see Fig.~\ref{fig: prop4}). %See examples depicted at Figure~\ref{fig: pendant paths 2}.
\begin{figure}
	\centering
	\includegraphics[scale=0.5]{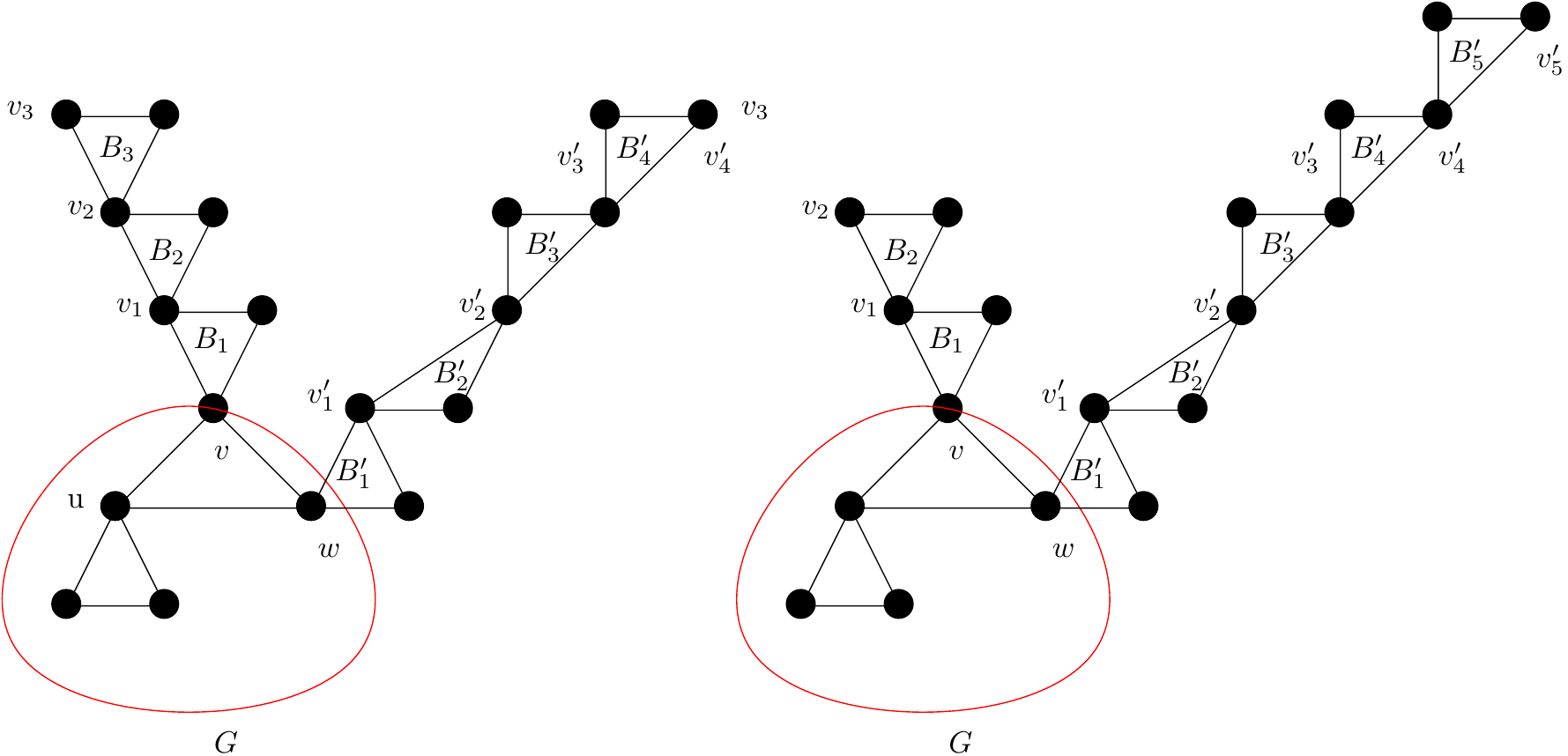}   
	\caption{From left to right $G[3,3,4]$ and $G[3,2,5]$}
		\label{fig: prop4}
\end{figure}

\begin{proposition}\label{prop: pendant path bocks at distinct vertices}
	Let $G\in \B{n}$ with at least one cut vertex. If $\ell$ and $k$ are two positive integers such that $1\le\ell\le k$ and $G[q,k,\ell]$ has at most three adjacent cut vertices, then $G[q,k,\ell]\prec G[q,k+1,\ell-1]$.
\end{proposition}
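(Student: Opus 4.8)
The plan is to reduce the statement, by way of Schwenk's formula, to a single one-vertex-deletion comparison on a common ``core'', and then to settle that comparison by an induction that descends on the length of the shorter pendant path-block. Put $C:=G[q,k,\ell-1]$. Then $G[q,k,\ell]$ is the coalescence of a $K_{q+1}$ with $C$ at the noncut vertex $a$ of the terminal block of the pendant $(q,\ell-1)$-path-block at $v$ (with $a=v$ when $\ell=1$), while $G[q,k+1,\ell-1]$ is the coalescence of a $K_{q+1}$ with $C$ at the corresponding vertex $b$ of the pendant $(q,k)$-path-block at $w$. So, by Lemma~\ref{lem: first technical lemma} applied with the connected graph $K_{q+1}$ and the graph $C$ (whose distinguished vertices are $a$ and $b$), it suffices to prove $C-b\prec C-a$.

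I would prove, more generally and by induction on $m$, that $D-b\prec D-a$ for every $D=G[q,k',m]$ with $k'\ge m\ge0$ ($G$ still satisfying the hypotheses), where $a,b$ now denote the ends of the short and long pendant path-blocks of $D$; the case $D=C$ gives what was needed. For $m\ge1$ the decisive observation is that $D-a$ and $D-b$ are precisely $\widehat{G}(q+1,q)$ and $\widehat{G}(q,q+1)$, where $\widehat{G}=G[q,k'-1,m-1]$ and the two distinguished vertices of $\widehat{G}$ are the ends of its pendant path-blocks --- deleting $a$ (resp.\ $b$) only turns a terminal $K_{q+1}$ into a $K_q$. In formula~\eqref{eq: proposicion cliques} for $P_{\widehat{G}(r+1,s-1)}(x)-P_{\widehat{G}(r,s)}(x)$ the coefficient $s-r-1$ vanishes when $r=q$, $s=q+1$, so that
\[ P_{D-a}(x)-P_{D-b}(x)=(x+1)^{2q-2}\bigl(P_{\widehat{G}-v'}(x)-P_{\widehat{G}-u'}(x)\bigr), \]
with $u',v'$ the ends of the long and short pendant path-blocks of $\widehat{G}$. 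As $\widehat{G}-u'$ and $\widehat{G}-v'$ arise from $\widehat{G}$ by the same terminal-block reduction, the inequality $\widehat{G}-u'\prec\widehat{G}-v'$ is exactly the instance of the statement being proved for $\widehat{G}=G[q,k'-1,m-1]$; checking, via Lemma~\ref{lem: subgraph spectral radius}, that the polynomial inequality holds on $x\ge\rho(D-b)$ then closes the induction down to $m=0$.

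It remains to treat $m=0$, that is $G[q,k',0]-b\prec G[q,k',0]-v$, and here the hypotheses are used essentially. Because $G$ has a cut vertex and at most three pairwise adjacent cut vertices, the block $B$ of $G$ containing the edge $vw$ has exactly one cut vertex $z$ of $G$, so $G=B\cdot_z G_z$ with $G_z$ a nontrivial block graph. Then $G[q,k',0]-b$ and $G[q,k',0]-v$ are the coalescences of $G_z$ at $z$ with two graphs each of which is the block-path made of one $K_q$ and $k'$ copies of $K_{q+1}$ with the $K_q$ at one end --- hence the two graphs are isomorphic --- while the image of $z$ lies in a $K_{q+1}$-end-block in the first case and in the $K_q$-end-block in the second. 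By Lemma~\ref{lem: second technical lemma} the comparison then reduces to comparing that block-path minus a noncut vertex of its $K_q$-end-block against the same block-path minus a noncut vertex of its $K_{q+1}$-end-block; writing these as $\mathcal{G}_0(q-1,q+1)$ and $\mathcal{G}_0(q,q)$ with $\mathcal{G}_0$ the block-path of $k'-1$ copies of $K_{q+1}$ (so that $\mathcal{G}_0-u\cong\mathcal{G}_0-v$ by symmetry), this is Proposition~\ref{prop: pendant cliques} with $r=q-1$, $s=q+1$, whose hypothesis $r\le s-2$ holds with equality since $q\ge2$.

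I expect the crux to be the identification, in the inductive step, of $D-a$ and $D-b$ with pendant-clique graphs $\widehat{G}(q+1,q)$, $\widehat{G}(q,q+1)$ whose clique sizes differ by exactly one: this is what annihilates the ``bulk'' term $(s-r-1)[\cdots]$ in~\eqref{eq: proposicion cliques} and lets the recursion descend to a strictly smaller instance with both path-lengths reduced. The remaining work is bookkeeping --- the degenerate cases (empty pendant path-blocks, $k'=m$, $q=2$, and whether $z$ coincides with $u$ or $v$ when $\mathcal{G}_0$ is trivial), the correct ranges for the polynomial inequalities, and verifying both that the cut-vertex hypothesis yields the decomposition $G=B\cdot_z G_z$ and that every use of $\prec$ has the intended orientation.
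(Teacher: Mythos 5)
Your proposal is correct and follows essentially the same route as the paper: reduce to comparing the two end-vertex-deleted graphs, descend via the identity that shrinks both pendant path-blocks by one block (the paper derives it directly from Schwenk's formula, you recover the very same identity by specializing~\eqref{eq: proposicion cliques} at $r=q$, $s=q+1$), and settle the base case $m=0$ exactly as the paper does, through the decomposition $G=B\cdot_z G_z$ forced by the cut-vertex hypotheses together with Lemma~\ref{lem: second technical lemma} and Proposition~\ref{prop: pendant cliques}. The only adjustment needed is to state your intermediate induction claim for $k'>m\ge 0$ rather than $k'\ge m\ge 0$: the case $k'=m$ would bottom out at $G[q,0,0]$, where $G-w\prec G-v$ can fail (e.g.\ when $G-v\cong G-w$), but all instances actually used satisfy $k'-m=k-\ell+1\ge 1$, so nothing else in your argument changes.
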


\begin{proof}
		We use $B_1,\ldots,B_{\ell}$ and $B'_1,\ldots,B'_k$ to denote the blocks of the $(q,\ell)$-path-block, denoted $\camino{\ell}$, and the $(q,k)$-path-block, denoted $\camino{k}$, respectively, $v=v_0$, $w=v'_0$, and $v_{\ell}$ and $v'_k$ stands for a fixed arbitrary noncut vertex of $B_{\ell}$ and $B'_k$ respectively. By $G^l[q,k,\ell]$ (resp. $G^r[q,k,\ell]$) we denote the graph $G[q,k,\ell]-v_{\ell}$ (resp. $G[q,k,\ell]-v'_{k}$). When both vertices are removed we use $G^{lr}[q,k,\ell]$. By applying Lemma~\ref{lem: Schwenk} to $G^l[q,k,\ell-1]$ at $v_{\ell-1}$ is derived the following identity.
		\begin{equation}\label{eq: uno prop principal}
		P_{G[q,k,\ell]}(x)=(x+1)^{q-1}((x-q+1)P_{G[q,k,\ell-1]}(x)-q P_{G^l[q,k,\ell-1]}(x)).
		\end{equation}
		Analogously
		\begin{equation}\label{eq: dos prop principal}
		P_{G[q,k,\ell]}(x)=(x+1)^{q-1}((x-q+1)P_{G[q,k-1,\ell]}(x)- q P_{G^r[q,k-1,\ell]}(x)).
		\end{equation}
		By combining~\eqref{eq: uno prop principal} and~\eqref{eq: dos prop principal} we obtain
		\begin{equation}\label{eq: tres prop principal}
		P_{G[q,k+1,\ell-1]}(x)-P_{G[q,k,\ell]}(x)=q(x+1)^{q-1}(P_{G^l[q,k,\ell-1]}(x)-P_{G^r[q,k,\ell-1]}(x))).
		\end{equation}
		Again, by using properly Lemma~\ref{lem: Schwenk}, we derive the next identity	
		\begin{equation*}
		\begin{split}
		P_{G^l[q,k,\ell]}(x)&=(x+1)^{2q-3}\{(x-q+1)[(x-q+2)P_{G[q,k-1,\ell-1]}(x)\\
		&-(q-1)P_{G^l[q,k-1,\ell-1]}(x)]+q((q-1)P_{G^{lr}[q,k-1,\ell-1]}(x)\\
		&-(x-q+2)P_{G^r[q,k-1,\ell-1]}(x))\}.
		\end{split}
		\end{equation*}
		Analogously,
		\begin{equation*}
		\begin{split}
		P_{G^r[q,k,\ell]}(x)&=(x+1)^{2q-3}\{(x-q+1)[(x-q+2)P_{G[q,k-1,\ell-1]}(x)\\
		&-(q-1)P_{G^r[q,k-1,\ell-1]}(x)]+q((q-1)P_{G^{lr}[q,k-1,\ell-1]}(x)\\
		&-(x-q+2)P_{G^l[q,k-1,\ell-1]}(x))\}.
		\end{split}
		\end{equation*}
		Hence
		\begin{equation}\label{eq: base case}
				\begin{split}
				P_{G^{l}[q,k,\ell]}(x)-P_{G^r[q,k,\ell]}(x)&=(x+1)^{2(q-1)}(P_{G^{\ell}[q,k-1,\ell-1]}(x)\\
				&-P_{G^r[q,k-1,\ell-1]}(x)).
				\end{split}
		\end{equation}
		By applying~\eqref{eq: base case} repeatedly we obtain for every $1\le j\le\ell$
		\begin{equation}\label{eq: general formula}
		\begin{split}
		P_{G^{l}[q,k,\ell]}(x)-P_{G^r[q,k,\ell]}(x)&=(x+1)^{2j(q-1)}(P_{G^{l}[q,k-j,\ell-j]}(x)\\
		&-P_{G^r[q,k-j,\ell-j]}(x)).
		\end{split}
		\end{equation} 
		Replacing in~\eqref{eq: tres prop principal} we obtain that for every $0\le j\le\ell-1$
		\begin{equation}\label{eq: formula principal}
		\begin{split}
		P_{G[q,k+1,\ell-1]}(x)-P_{G[q,k,\ell]}(x)&=q(x+1)^{(2j+1)(q-1)}(P_{G^l[q,k-j,\ell-j-1]}(x)\\
		&-P_{G^r[q,k-j,\ell-j-1]}(x)).
		\end{split}
		\end{equation}
		In particular, if $j=\ell-1$
		\begin{equation}\label{eq: formula final}
		\begin{split}
		P_{G[q,k+1,\ell-1]}(x)-P_{G[q,k,\ell]}(x)&=q(x+1)^{(2\ell-1)(q-1)}(P_{(G-v)[q,k-\ell+1,0]}(x)\\
		&-P_{G^r[q,k-\ell+1,0]}(x)).
		\end{split}
		\end{equation}
Thus it suffices to prove that $G^r[q,t,0]\prec (G-v)[q,t,0]$ for all $t\ge 1$ (see Fig.~\ref{fig: prop4bis}). First observe that, since $G$ has at least one cut vertex, there exists a graph $H_1\in\B{n}$ such that $G^r[q,t,0]$ is the coalescence between $H_1$ and $\camino{t+1}-v_{t+1}$ at a noncut vertex $x\in V(H_1)$ and a noncut vertex $v_0\in V(B_1)$ of $\camino{t+1}-v_{t+1}$ respectively. Analogously, $(G-v)[q,t,0]$ is the coalescence between $H_1$ and  $\camino{t+1}-v_0$ at a noncut vertex $x\in V(H_1)$ and a noncut vertex $y\in V(B_1)\setminus\{v_0\}$ of $\camino{t+1}-v_0$ respectively. From this observation combined with Lemma~\ref{lem: second technical lemma} and Proposition~\ref{prop: pendant cliques} we conclude that $G^r[q,t,0]\prec (G-v)[q,t,0]$. 
\end{proof}
\begin{figure}
	\centering
	\includegraphics[scale=0.5]{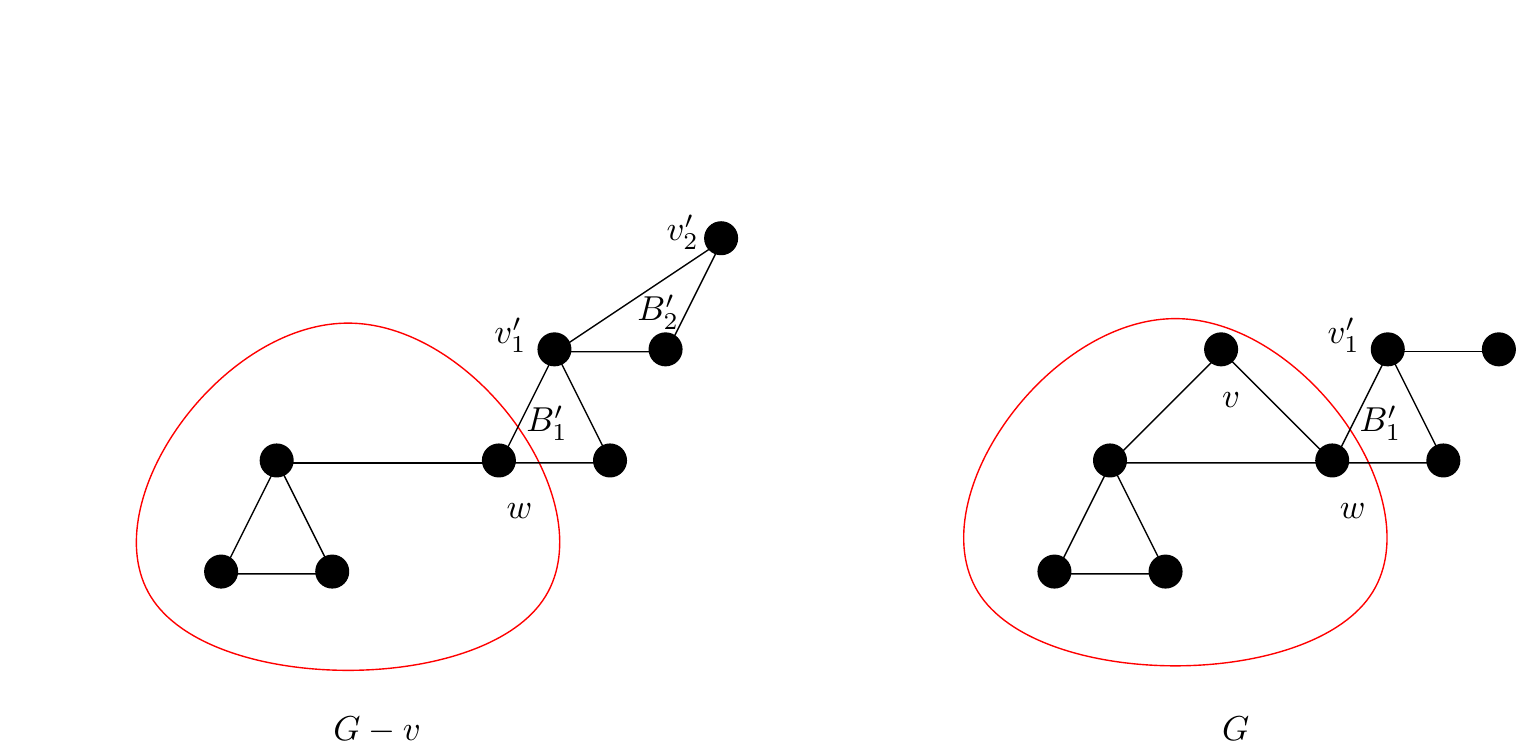}   
	\caption{From left to right $(G-v)[3,2,0]$ and $G^r[3,2,0]$.}
	\label{fig: prop4bis}
\end{figure}
\section{Main result}\label{sec: main result}
Let $G\in\B{n}$ and let $B$ be a block of $G$. We say that $B$ is a \emph{special block of type one} if  $B$ has at least two pendant path-blocks at $v\in V(B)$ (see Fig.~\ref{fig: prop2}, the block whose vertex set is $\{v,x,y\}$ is a special block of type one). We say that $B$ is a \emph{special of type two} if $B$ has a pendant path-block at $v\in V(B)$ and a pendant path-block at $w\in V(B)$ with $v\neq w$ (see Fig.~\ref{fig: prop4}, the block whose vertex set is $\{u,v,w\}$ is a special block of type two). The below lemma, whose proof is omitted, will be used to prove our main result.
\begin{lemma}\label{lem: special blocks}
	If $G\in\B{n}$, then $G$ either is a $(q,b)$-path-block, or has a special block of type one, or has a special block of type two.
\end{lemma}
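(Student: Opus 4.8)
The statement to be proved is Lemma~\ref{lem: special blocks}: any $G\in\B{n}$ is either a $(q,b)$-path-block, or contains a special block of type one, or contains a special block of type two. The plan is to argue by induction on the number of blocks $b$ of $G$, peeling off leaf blocks, and to analyse the local structure around a cut vertex that is "deepest" in the block tree. First I would recall that the block-cut tree $T(G)$ of a block graph is itself a tree whose leaves correspond precisely to the leaf blocks of $G$; since $b=\frac{n-1}{q}$, the cases $b=0$ (trivial graph) and $b=1$ ($G=K_{q+1}$, a $(q,1)$-path-block) are immediate, and for $b\ge 2$ there are at least two leaf blocks.

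\textbf{Key steps.} The main step is to look at the block-cut tree $T(G)$ and pick a vertex as follows. If $T(G)$ is a path, then $G$ is a $(q,b)$-path-block and we are done; the interesting case is when $T(G)$ is not a path, so it has a vertex of degree $\ge 3$. Among the blocks of $G$ that are \emph{not} leaf blocks, I would choose a block $B$ which is "closest to the leaves", i.e.\ a block $B$ such that every block reachable from $B$ in $T(G)$ on the side away from some fixed root is a leaf block or lies on a pendant path of blocks hanging off $B$. Concretely: let $B$ be a non-leaf block all of whose incident branches in $T(G)$, except possibly one, consist of pendant $(q,b_i)$-path-blocks attached at cut vertices of $B$. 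Such a $B$ exists by finiteness (take any leaf-ward extremal non-leaf block). Now $B$ has at least two pendant path-blocks attached to it (counting the leaf blocks directly adjacent to it and the paths they start). If two of these pendant path-blocks are attached at the \emph{same} vertex $v\in V(B)$, then $B$ is a special block of type one and we are done. Otherwise all the pendant path-blocks are attached at pairwise distinct vertices of $B$; picking any two of them, attached at distinct vertices $v\ne w\in V(B)$, shows $B$ is a special block of type two, and we are done.

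\textbf{The main obstacle.} The delicate point is to set up the "leaf-ward extremal non-leaf block" carefully and to verify that it really does carry at least two pendant path-blocks in the sense required by the definitions of special blocks of type one and two (in particular that what hangs off $B$ genuinely is a pendant $(q,b_i)$-path-block, not some more complicated block-graph fragment). This is where one uses the extremality: if some branch off $B$ away from the root were not a single pendant path of blocks, it would contain a non-leaf block strictly further from the root than $B$, contradicting the choice of $B$. Equivalently, one can phrase the whole argument as: walk from an arbitrary leaf block of $G$ inward along $T(G)$ until the first time the block-cut tree branches (degree $\ge 3$ at a block-node), and that branching block is special of type one or two depending on whether two of its branches emanate from the same cut vertex; if no such branching ever occurs, $G$ is a $(q,b)$-path-block. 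A minor additional check is that a pendant path may be "empty" ($b_i=0$), meaning a leaf block is attached directly to $B$, which is exactly the degenerate case allowed in Proposition~\ref{prop: pendant pathas at the same vertex}; this does not affect the classification since a leaf block attached at $v\in V(B)$ still counts as a pendant path-block at $v$.
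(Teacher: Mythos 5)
The paper itself omits the proof of this lemma, so there is nothing to compare your argument against; judging it on its own merits, your overall strategy (locate an extremal branching point of the block--cut tree $T(G)$ and read off a special block of type one or two) is the right idea, but the extremal object you actually select does not deliver the pivotal assertion ``$B$ has at least two pendant path-blocks attached to it'', and that assertion is false for your choice. A ``leaf-ward extremal (deepest) non-leaf block'' $B$ does have the property that every branch away from the root is a pendant path-block, but there may be only \emph{one} such branch, the remaining branch of $B$ being the (non-path) rest of the graph. Concretely, let $G\in\B{n}$ consist of a block $B_0$ with three cut vertices $a,b,c$, a pendant path of five blocks at $a$, a pendant path of five blocks at $b$, and a single block at $c$, and root $T(G)$ at the leaf block at the far end of the $a$-path: the deepest non-leaf block is the fourth block of the $b$-path, which carries exactly one pendant path-block (the leaf block beyond it) and is special of neither type. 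Your alternative phrasing, ``walk in from an arbitrary leaf block until the first branching (degree $\ge 3$ at a block-node)'', has the same defect: the first branching block met is guaranteed only the one pendant path you walked along, while its other branches may branch again farther out (attach at $b$ and $c$ of a block $B$ configurations that themselves branch, and walk in from a single leaf block at $a$); such a $B$ need not be special, and moreover the first branching may occur at a cut-vertex node of $T(G)$, which your stopping rule silently excludes.

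The gap is repairable, and the repaired argument is short: root $T(G)$ anywhere and pick a node $x$ of degree at least $3$ at \emph{maximum depth}; if no such node exists, every node of $T(G)$ has degree at most $2$ and $G$ is a $(q,b)$-path-block. By maximality, every subtree hanging below $x$ is a path in $T(G)$, hence corresponds to a genuine pendant $(q,b_i)$-path-block. If $x$ is a cut-vertex node $v$, then at least two pendant path-blocks hang at $v$, so any block containing $v$ and distinct from two of them (its parent block, or a third child block if $v$ is the root) is a special block of type one. If $x$ is a block node $B$, then $B$ has at least two child cut vertices $v\neq w$, each carrying a pendant path-block, so $B$ is a special block of type two. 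With this choice of extremal node your case distinction (same cut vertex versus distinct cut vertices, empty paths allowed) goes through as you wrote it; as it stands, however, the existence step of your proof does not.
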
  
%
%We obtain the graph maximizes the spectral radius within $\B{n}$ by applying Proposition~\ref{prop: maximum spectral radius}.
%%
%%
%%
%\begin{proof}		
%	Suppose towards a contradiction that $H\in\B{n}$ maximizes the spectral radius within $\B{n}$ and it is not $S(n,q)$. Hence $H$ has two leaf blocks $B$ and $B'$ such that $V(B)\cap V(B')=\emptyset$. Assume that $v$ and $v'$ are their corresponding cut vertices. Hence, $x_v\ge x_{v'}$ or $x_v\le x_{v'}$, say $x_v\ge x_{v'}$. Let $F$ be the set of edges $v'u$ with $u\in V(B')\setminus\{v'\}$. By Proposition~\ref{prop: maximum spectral radius}, 
%	%
%	\[\rho(H)<\rho((H-F)\cup\{uv:\,u\in V(B')\}),\]
%	%
%	we reach a contradiction that arose from supposing that $\rho(H)$ is the maximum among all possible $\rho(G)$ with $G\in\B{n}$. Therefore, if $G\in\B{n}\setminus\{S(n,q)\}$, then $\rho(G)<\rho(S(n,q))$.
%\end{proof}
%
%
%\begin{proof}
Now we are ready to put all pieces together in order to prove the main result of the article.
\begin{proof}[Proof of Theorem~\ref{thm:extremal radius}]
	The upper bound follows from Theorem~\ref{thm:extremal radius maximum}. Assume that $G\in\B{n}$ and it is not a path-block. By Lemma~\ref{lem: special blocks}, $G$ has either a special block of type one or a special block of type two. Hence, by Propositions~\ref{prop: pendant pathas at the same vertex} and~\ref{prop: pendant path bocks at distinct vertices}, there exists a graph transformation onto $G$, involving the corresponding pendant-path-blocks, such that the resulting graph $G'$ satisfies $\rho(G')<\rho(G)$ and $G'$ has an special block less than $G$. Therefore, continuing with this procedure as long as $G'$ is a path-block, we conclude that  $\rho\left(\camino{\frac{n-1}{q}}\right)<\rho(G)$ for all $G\in\B{n}$.
\end{proof}
\section{Lower bound for the spectral radius}\label{sec: bounds}
%}
\begin{theorem}
	Let $G\in\B{n}$ and let $n-1>q$. Then, 
	\[\rho(G) \ge q+\frac{\sqrt{q}}{2}\]
    for every $2\le q \le 4$, and 
    \[\rho(G) \ge q+\frac{4+(q-1)\sqrt{2}}{q+3\sqrt{2}},\]
    for every $q \ge 5$. %Besides, the upper bound holds if and only if $G=S(n,q)$.
\end{theorem}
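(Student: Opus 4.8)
The plan is to reduce the estimate to a single comparison graph, the $(q,3)$-path-block $\camino 3$, and then to bound $\rho(\camino 3)$ below by an explicit computation. Throughout I work with $G$ having at least three blocks, i.e.\ $n-1\ge 3q$: this is what the argument needs, since when $G$ has exactly two blocks one has $\B n=\{\camino 2\}$ with $\rho(\camino 2)=\tfrac{q-1+\sqrt{q^2+6q+1}}{2}$. For the reduction: if some block $B$ of $G$ contains two distinct cut vertices $w_1,w_2$, then the further blocks meeting $B$ at $w_1$ and at $w_2$ are distinct (else two blocks would share two vertices) and disjoint (else $\{w_1,w_2,z\}$ would be a triangle spanning three blocks), so $G$ contains an isomorphic copy of $\camino 3$ as a subgraph and $\rho(G)\ge\rho(\camino 3)$ by Lemma~\ref{lem: subgraph spectral radius}. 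Otherwise every block of $G$ has at most one cut vertex, and inspecting the block--cut tree forces $G=S(n,q)$; then $\rho(G)=\rho(S(n,q))\ge\rho(S(3q+1,q))>\rho(\camino 3)$ by Lemma~\ref{lem: subgraph spectral radius} and Theorem~\ref{thm:extremal radius maximum}. (When Theorem~\ref{thm:extremal radius} applies one may simply write $\rho(G)\ge\rho(\camino{(n-1)/q})\ge\rho(\camino 3)$.) So it suffices to prove $\rho(\camino 3)\ge q+\tfrac{\sqrt q}{2}$ for $2\le q\le 4$ and $\rho(\camino 3)\ge q+\tfrac{4+(q-1)\sqrt2}{q+3\sqrt2}$ for $q\ge 5$.

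Next I compute $\rho(\camino 3)$. The partition of $V(\camino 3)$ into the $2q$ noncut vertices of the two leaf blocks, the $q-1$ noncut vertices of the central block, and the two cut vertices is equitable, with quotient matrix $\left(\begin{smallmatrix}q-1&0&1\\0&q-2&2\\q&q-1&1\end{smallmatrix}\right)$; hence, writing $\lambda=q+\mu$, the Perron root equals $\rho(\camino 3)=q+\mu^{*}(q)$, where $\mu^{*}(q)$ is the largest root of
\[
h_q(\mu)\;=\;(q-1+\mu)(\mu+1)(\mu+2)-q(\mu+2)-2(q-1)(\mu+1)\;=\;q(\mu^2-2)+\mu(\mu+1)^2 .
\]
Since $h_q$ is monic of degree $3$ in $\mu$ and $\mu^{*}(q)$ is a simple root, $h_q>0$ on $(\mu^{*}(q),\infty)$, so it is enough to check $h_q(\mu_0)\le 0$ at $\mu_0=\tfrac{\sqrt q}{2}$ for $2\le q\le 4$ and at $\mu_0=\tfrac{4+(q-1)\sqrt2}{q+3\sqrt2}$ for $q\ge 5$. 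The compact form $h_q(\mu)=q(\mu^2-2)+\mu(\mu+1)^2$ already explains the two choices: $\mu^{*}(q)<\sqrt2$ for all $q$, $\mu^{*}(q)\nearrow\sqrt2$ as $q\to\infty$, and $\tfrac{4+(q-1)\sqrt2}{q+3\sqrt2}$ is the natural rational lower approximant to $\sqrt2$ with the right asymptotics.

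For $2\le q\le 4$, substituting $\mu=\sqrt q/2$ and factoring yields
\[
h_q\!\left(\tfrac{\sqrt q}{2}\right)=\tfrac{\sqrt q}{8}\,(\sqrt q-2)\,(2q+5\sqrt q-2)\;\le\;0 ,
\]
with equality exactly at $q=4$. For $q\ge 5$, substituting $\mu=\tfrac{4+(q-1)\sqrt2}{q+3\sqrt2}$, clearing the positive denominator $(q+3\sqrt2)^3$, and using that $q=4$ is again an equality case (so $q-4$ divides the resulting numerator), one reduces $h_q(\mu)\le 0$ to
\[
(q-4)\bigl(\sqrt2\,q^2+(10+3\sqrt2)\,q+16+10\sqrt2\bigr)\;\ge\;0 ,
\]
which is immediate for $q\ge 5$ as both factors are positive. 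That finishes the proof modulo the routine polynomial arithmetic.

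I do not expect a deep obstacle once the identity $h_q(\mu)=q(\mu^2-2)+\mu(\mu+1)^2$ is in hand, since it trivialises both the choice of the constants and the sign analysis. Two points nonetheless need care. First, the reduction: graphs with four or more pairwise adjacent cut vertices are \emph{not} covered by Theorem~\ref{thm:extremal radius}, so the inequality $\rho(G)\ge\rho(\camino 3)$ for those graphs must come from the subgraph argument above (or, alternatively, from the conjecture that $\camino b$ is the overall minimiser), and one must be explicit that at least three blocks are required. Second, the $q\ge 5$ case is an exercise in $\mathbb Z[\sqrt2]$-arithmetic — expanding $(q+3\sqrt2)^3 h_q(\mu_0)$ and extracting the factor $(q-4)$ — and this bookkeeping is the only place where an error is at all likely.
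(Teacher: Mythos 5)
Your proposal is correct, and although it follows the paper's overall strategy---reduce to the three-block path-block $\camino{3}$, then lower-bound the largest root of a cubic---both steps are implemented differently, and in both places your version is tighter. For the reduction, the paper writes $\rho(\camino{3})\le\rho(\camino{b})\le\rho(G)$ citing only subgraph monotonicity, which does not literally apply (for instance $S(n,q)$ contains no copy of $\camino{3}$, and $\rho(\camino{b})\le\rho(G)$ is precisely the conditional part of Theorem~\ref{thm:extremal radius}); your case split---either some block contains two cut vertices, which yields a genuine $\camino{3}$ subgraph and Lemma~\ref{lem: subgraph spectral radius} applies, or every block has at most one cut vertex, forcing $G=S(n,q)$, handled by Theorem~\ref{thm:extremal radius maximum} together with Lemma~\ref{lem: subgraph spectral radius}---closes that gap and covers graphs with arbitrarily many pairwise adjacent cut vertices. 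For the cubic, the paper extracts $f_q(x)=(x-q)\bigl((x-q)(x+2)+1\bigr)-2q$ from Schwenk's formula, while you obtain the same polynomial shifted, $h_q(\mu)=f_q(q+\mu)$, as the characteristic polynomial of an equitable-partition quotient; your quotient matrix and the identity $h_q(\mu)=q(\mu^2-2)+\mu(\mu+1)^2$ are both correct. The sign analysis also differs: the paper evaluates $f_q$ at $q+1$ and $q+\sqrt2$ and invokes monotonicity and convexity (a secant argument) to reach the bound for $q\ge5$, whereas you evaluate directly at the claimed bound; your two exact computations, $8h_q(\sqrt q/2)=\sqrt q\,(\sqrt q-2)(2q+5\sqrt q-2)$ and $(q+3\sqrt2)^3\,h_q\bigl(\tfrac{4+(q-1)\sqrt2}{q+3\sqrt2}\bigr)=-(q-4)\bigl(\sqrt2\,q^2+(10+3\sqrt2)q+16+10\sqrt2\bigr)$, check out, so no convexity is needed. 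One caveat you share with the paper: the theorem is stated under $n-1>q$, but both arguments treat only $b\ge3$; for $b=2$ the unique graph is $\camino{2}$ with $\rho(\camino{2})=\tfrac{q-1+\sqrt{q^2+6q+1}}{2}$, which is \emph{below} the stated bounds (for $q=2$ one gets $\tfrac{1+\sqrt{17}}{2}\approx 2.56<2+\tfrac{\sqrt2}{2}$), so the restriction to at least three blocks is genuinely necessary and your explicit flagging of it is appropriate.
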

\begin{proof}
%	Since $n-1>q$, the number of blocks $b$ of $G$ is at least two. By Theorem~\ref{thm:extremal radius}, $S(n,q)$ is the graph in $B(n,q)$ having maximum expectral radius. Let us call $B_i$ for each $i=1,\ldots,b$ to the blocks of $S(n,q)$ and $v$ to its only cut vertex. By symmetry we can assume that all coordinates of the principal eigenvector corresponding to those vertices in $B_i\setminus\{v\}$ are equal to $x$  for each $1\le i\le b$  and let $y$ be the coordinate corresponding to $v$. By Perron-Frobenius theorem we can assume that $x$ and $y$ are positive real numbers. If $\rho$ is the spectral radius of $S(n,q)$, then
%	\[\begin{matrix}
%	(q-1)&x&+&y&=&\rho x\\
%	bq & x& & &=&\rho y.
%	\end{matrix}\]
%	Hence, $\rho^2-(q-1)\rho-n+1=0$ and consequently $\rho=\frac{q-1+\sqrt{(q-1)^2+4(n-1)}}{2}$. By Theorem~\ref{thm:extremal radius} the equality holds if and only if $G=S(n,q)$. 
%	
	Assume now that $q\ge 2$ and $b\ge 3$. By  Lemma~\ref{lem: subgraph spectral radius}, we know that $\rho(P_{3}^{q})\le\rho(\camino{b})\le\rho(G)$ for every graph $G\in\B{n}$. By simple calculation, using Lemma~\ref{lem: Schwenk}, we obtain the characteristic polynomial of $\camino{3}$
	\begin{equation}
	\begin{split}
	P_{\camino{3}}(x)&=(x+1)^{3q-4}\Big((x-q)(x+2)+1\Big)\\&\Big((x-q)\Big((x-q)(x+2)+1\Big)-2q\Big).
	\end{split}
	\end{equation}
	Since $(x-q)\Big((x-q)(x+2)+1\Big)-2q = -2q$ when $\Big((x-q)(x+2)+1\Big)=0$, we have that $\rho(P_{3}^{q})$ is the greatest root of $f_q(x) := (x-q)\Big((x-q)(x+2)+1\Big)-2q$. Futhermore, since $f_q(x)$ is an increasing function on $(q, +\infty)$ and $f_q(q)<0$, we have that $\rho(P_{3}^{q})$ is the unique root of $f_q(x)$ on $(q, +\infty)$. 
    
    Using the following two facts
    $$
    f_q\left(q+\frac{\sqrt{q}}{2}\right)= \frac{q+4}{8}\sqrt{q} + \frac{q(q-6)}{4}\le 0\quad \quad\mbox{ and } \quad \quad f_q(q+1) = 4-q\ge 0,
    $$ 
for every $2\le q\le 4$, and 

    $$
    f_q(q+1) = 4-q\le 0\quad \quad\mbox{ and } \quad \quad  f_q(q+\sqrt{2}) = 4+3\sqrt{2}\ge 0,
    $$ 
    for every $q \ge 5$, and taking into account that $f''_q(x)>0$ for every $x\in(q,+\infty)$ we conclude that 
    $$\rho(G) \ge q+\frac{\sqrt{q}}{2}$$
    for every $2\le q \le 4$, and 
    $$\rho(G) \ge q+\frac{4+(q-1)\sqrt{2}}{q+3\sqrt{2}},$$
    for every $q \ge 5$.

\end{proof}

\section{Discussions and further research}\label{sec: discussion}

 We have presented three graphs transformations to deal with the minimum spectral radius of this class of block graphs, namely Propositions~\ref{prop: pendant pathas at the same vertex},~\ref{prop: pendant cliques} and~\ref{prop: pendant path bocks at distinct vertices}, but the last one has very strong hypothesis on the graph $G$. We do not know if they can be weakened. Nevertheless, we have collected very strong computational evidence that drives us to following conjecture.
\begin{conjecture}\label{conj: primera}
	If $G\in\B{n}\setminus\{\camino{b}\}$, then $G\prec \camino{b}$.
\end{conjecture}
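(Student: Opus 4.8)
The plan is to follow the same transformational scheme as in the proof of Theorem~\ref{thm:extremal radius}, the point being that the hypothesis ``at most three pairwise adjacent cut vertices'' is used only through Proposition~\ref{prop: pendant path bocks at distinct vertices}. Recall that by Lemma~\ref{lem: special blocks} every $G\in\B{n}\setminus\{\camino{b}\}$ has a special block of type one or a special block of type two. The former is handled by Proposition~\ref{prop: pendant pathas at the same vertex}, which carries no restriction at all. Hence, were Proposition~\ref{prop: pendant path bocks at distinct vertices} available for an arbitrary special block of type two, then for each $G\neq\camino{b}$ one could produce a graph $G'\in\B{n}$ with one special block fewer and $G\prec G'$; iterating this and using that $\prec$ is transitive (if $G\prec H$ then $\rho(H)<\rho(G)$, so $[\rho(G),+\infty)\subseteq[\rho(H),+\infty)$ and the defining polynomial inequalities compose) would give $G\prec\camino{b}$, which is exactly the assertion of the conjecture.

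Thus the whole problem reduces to deleting the hypothesis from Proposition~\ref{prop: pendant path bocks at distinct vertices}. The first thing to notice is that the chain of Schwenk identities in its proof, equations~\eqref{eq: uno prop principal}--\eqref{eq: formula final}, invokes only Lemma~\ref{lem: Schwenk} and never the bound on cut vertices; so those reductions survive for any $G\in\B{n}$ with a cut vertex, and the task collapses to the base-case inequality $G^r[q,t,0]\prec(G-v)[q,t,0]$ for every $t\ge1$, now allowing the block $B_1$ that carries the two pendant path-blocks to have arbitrarily many cut vertices. I would attack this base case exactly as in the restricted argument: exhibit a common graph $\widetilde H\in\B{n}$ realizing both $G^r[q,t,0]$ and $(G-v)[q,t,0]$ as coalescences of $\widetilde H$ with a piece of a path-block, and then invoke Lemma~\ref{lem: second technical lemma} together with Proposition~\ref{prop: pendant cliques}, where $\widetilde H$ plays the role of the connected graph and the two path-block pieces play the roles of the graphs being compared.

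The difficulty is that when $B_1$ carries extra pendant path-blocks at vertices other than $v$ and $w$, the graph $\widetilde H$ is no longer a clean path-block tail but inherits this branching, and the hypotheses of Lemma~\ref{lem: second technical lemma}---that the two path-block pieces $H_1,H_2$ satisfy ``$H_1=H_2$ or $H_1\prec H_2$'' together with $H_2-v_2\prec H_1-v_1$---are precisely what must be re-established in this enlarged setting. I would handle this by a secondary induction on the number of extra pendant path-blocks attached to $B_1$, peeling them off one block at a time with Lemma~\ref{lem: Schwenk} and reabsorbing the resulting $(x+1)$-factors, so that the innermost comparison is again controlled by Proposition~\ref{prop: pendant cliques} applied to a single block.

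The main obstacle---and the reason the restriction was imposed---is to control the sign, uniformly for all $x\ge\rho(G^r[q,t,0])$, of a polynomial difference of the shape $P_{\widetilde H-x'}(x)-P_{\widetilde H-x''}(x)$, where $x',x''$ are two noncut vertices of a block bearing several pendant structures: depending on the configuration this difference can change sign, so the hypothesis ``$G-u\prec G-v$ or $G-u=G-v$'' of Proposition~\ref{prop: pendant cliques} need not hold a priori. Showing that the comparison nevertheless always points the right way---presumably by isolating the two distinguished pendant path-blocks at $v$ and $w$ and proving that their contribution dominates that of the remaining symmetric structures---is where the real work lies; this is consistent with, and would confirm, the computational evidence reported above.
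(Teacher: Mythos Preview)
The statement you are attempting to prove is posed in the paper as a \emph{conjecture}, not a theorem: the authors give no proof and explicitly write that ``new graph transformations need to be developed'' to settle it. So there is no ``paper's own proof'' to compare against.

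Your proposal is a correct and clear reduction, not a proof. You rightly observe that the Schwenk-formula manipulations in the proof of Proposition~\ref{prop: pendant path bocks at distinct vertices}, from \eqref{eq: uno prop principal} through \eqref{eq: formula final}, nowhere use the ``at most three pairwise adjacent cut vertices'' hypothesis, so that everything collapses to the base-case comparison $G^r[q,t,0]\prec(G-v)[q,t,0]$ with an unrestricted $G$. You also correctly locate why the restricted argument breaks: when the block $B_1$ carries additional cut vertices, the decomposition into a clean $\widetilde H$ coalesced with path-block tails is no longer available, and the sign hypotheses feeding Lemma~\ref{lem: second technical lemma} and Proposition~\ref{prop: pendant cliques} are not guaranteed.

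But your final paragraph is an honest admission that you have not closed this gap: the phrase ``this is where the real work lies'' concedes that the dominance of the two distinguished pendant path-blocks over the remaining branching at $B_1$ is asserted, not proved. The secondary induction you sketch---peeling off extra pendant path-blocks one at a time---does not by itself force the required sign, because each peel produces a difference of the form $P_{\widetilde H-x'}(x)-P_{\widetilde H-x''}(x)$ whose sign you yourself note ``can change'' depending on the configuration. Without a mechanism that pins down that sign (or a genuinely different transformation replacing Proposition~\ref{prop: pendant path bocks at distinct vertices}), the argument remains incomplete. This matches the authors' own assessment that the existing toolkit is insufficient.
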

Consequently, if this statement were true the following weaker conjecture would be also true.
\begin{conjecture}
	If $G\in\B{n}\setminus\{\camino{b}\}$, then $\rho(\camino{b})< \rho(G)$.
\end{conjecture}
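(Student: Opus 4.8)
The plan is to establish the stronger poset statement of Conjecture~\ref{conj: primera}, namely $G\prec\camino{b}$ for every $G\in\B{n}\setminus\{\camino{b}\}$; the desired inequality $\rho(\camino{b})<\rho(G)$ is then immediate from the remark following Definition~\ref{rmk: polynomial comparison}. The architecture would follow the proof of Theorem~\ref{thm:extremal radius} essentially verbatim: by Lemma~\ref{lem: special blocks} any non-path-block $G$ contains a special block of type one or of type two, and Proposition~\ref{prop: pendant pathas at the same vertex} already disposes of the type-one case with no restriction whatsoever. Consequently the whole problem collapses to a single task: removing the ``at most three adjacent cut vertices'' hypothesis from Proposition~\ref{prop: pendant path bocks at distinct vertices}. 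Once that is done the iterative reduction to $\camino{b}$ goes through unchanged, since every transformation only merges pendant structures and hence never increases the number of cut vertices inside a block.

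To feed Proposition~\ref{prop: pendant path bocks at distinct vertices} the configuration it expects, I would run an outer induction on $n$, taking as inductive hypothesis a \emph{rooted} refinement of Conjecture~\ref{conj: primera}: for every block graph $H$ on fewer than $n$ vertices with attaching vertex $r$, the corresponding path-block $P$ with attaching vertex $r_P$ satisfies both $H\prec P$ and $P-r_P\prec H-r$. Using Lemma~\ref{lem: second technical lemma}, this rooted hypothesis is exactly what is needed to replace each pendant block graph hanging off a vertex of a fixed block $B$ by the corresponding pendant path-block while strictly decreasing $\rho$. After this preprocessing, every cut vertex of $B$ carries a genuine pendant $(q,\ell)$-path-block, which is precisely the hypothesis of Proposition~\ref{prop: pendant path bocks at distinct vertices} --- with the sole difference that $B$ may now have four or more cut vertices.

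The Schwenk recursions driving the proof of Proposition~\ref{prop: pendant path bocks at distinct vertices}, equations~\eqref{eq: uno prop principal} through~\eqref{eq: formula final}, only peel the two pendant paths attached at $v$ and $w$ and never refer to the remaining cut vertices of $B$; they therefore survive intact and reduce the inequality $G[q,k,\ell]\prec G[q,k+1,\ell-1]$ to the single base case $G^r[q,t,0]\prec(G-v)[q,t,0]$ for all $t\ge 1$. The task is thus to prove this base case when $B$ carries arbitrarily many cut vertices. Imitating the existing argument, I would decompose both graphs as a coalescence of a common block graph $H_1$ with a suitable truncation of $\camino{t+1}$ and invoke Lemma~\ref{lem: second technical lemma}; its hypothesis demands a vertex-deletion comparison $H_1-u\prec H_1-v$ (or equality) together with the clique-shifting monotonicity of Proposition~\ref{prop: pendant cliques}, and I would try to supply the former by a secondary induction on the number of cut vertices of $B$, peeling one pendant path-block at a time.

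The hard part will be exactly this vertex-deletion comparison on $H_1$ once $B$ has at least four cut vertices: the three-cut-vertex restriction is precisely what currently forces, through~\eqref{eq: proposicion cliques}, the relevant polynomial difference to keep a constant sign for all $x\ge\rho(G)$, and I do not expect the peeling induction alone to preserve comparability of $H_1-u$ and $H_1-v$ in general. The most promising route to close this gap is to abandon the purely polynomial bookkeeping at the base case and instead control the sign of $P_{(G-v)[q,t,0]}(x)-P_{G^r[q,t,0]}(x)$ directly via a Perron-eigenvector and Rayleigh-quotient estimate localized on $B$, showing that redistributing a pendant path toward the vertex of smaller Perron weight strictly lowers the spectral radius. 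This is the step for which the authors report only computational evidence, and I expect essentially all of the real difficulty to reside there.
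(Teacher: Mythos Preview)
The statement you are attempting is listed in the paper as a \emph{conjecture}; it is explicitly not proved there, and the authors state that new graph transformations would have to be developed even for the stronger Conjecture~\ref{conj: primera}. There is therefore no proof in the paper against which to compare your proposal.

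What you have written is not a proof but a proof strategy, and you acknowledge as much in your final paragraph. The architecture you outline --- reducing everything to the removal of the three-cut-vertex hypothesis from Proposition~\ref{prop: pendant path bocks at distinct vertices}, observing that the Schwenk recursions~\eqref{eq: uno prop principal}--\eqref{eq: formula final} survive unchanged, and isolating the base case $G^r[q,t,0]\prec(G-v)[q,t,0]$ --- is accurate and lands exactly where the paper's argument runs out: that base case is handled in the paper only because the three-cut-vertex restriction forces the block containing $v$ and $w$ to carry at most one additional cut vertex, which is what makes the comparison feeding Proposition~\ref{prop: pendant cliques} go through. For four or more cut vertices you offer only a secondary peeling induction (which, as you yourself suspect, does not obviously preserve the needed $H_1-u\prec H_1-v$ comparability) and an unspecified Perron-vector/Rayleigh-quotient argument. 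Neither is carried out. In short, you have correctly located the obstruction the authors could not overcome, but you have not removed it; the proposal is a reasonable research plan for attacking the conjecture, not a proof of it.
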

We believe that in for proving Conjecture~\ref{conj: primera} knew graph transformations need to be developed. 

Another interesting graph class, related to that considered in this paper, to study the problem of finding the maximum and minimum spectral radius is that formed by those block graphs on $n$ vertices having exactly $b$ blocks not necessary all of them with the same size.

\section*{Acknowledgments}

Cristian M. Conde acknowledges partial support from ANPCyT PICT 2017-2522. Ezequiel Dratman and Luciano N. Grippo acknowledge partial support from ANPCyT PICT 2017-1315.

\bibliographystyle{abbrv}
\bibliography{blocks_same_size_ARXIV} 
\end{document}